\title{On the Automorphisms of Quantum Weyl Algebras}
\begin{document}
\author{Andrew P. Kitchin\thanks{Andrew P. Kitchin thanks EPSRC for its support.}  ~and St\'ephane Launois}
\date{ }

\maketitle

\theoremstyle{theorem}
\newtheorem{theorem}{Theorem}[section]
\newtheorem{definition}[theorem]{Definition}
\newtheorem{lemma}[theorem]{Lemma}
\newtheorem{example}[theorem]{Example}
\newtheorem{conjecture}[theorem]{Conjecture}
\newtheorem{question}[theorem]{Question}
\newtheorem{proposition}[theorem]{Proposition}
\newtheorem{corollary}[theorem]{Corollary}
\newtheorem{remark}[theorem]{Remark}
\begin{abstract} 
Motivated by Weyl algebra analogues of the Jacobian conjecture and the tame generators problem, we prove quantum versions of these problems for a family of analogues to the Weyl algebras. In particular, our results cover the Weyl-Hayashi algebras and tensor powers of a quantization of the first Weyl algebra which arises as a primitive factor algebra of $U_q^+(\mathfrak{so}_5)$.  
\end{abstract}
\noindent
\textbf{Mathematics Subject Classification (2010).} 16W35, 16S32, 16W20, 17B37.\\

\noindent
\textbf{Keywords.} Weyl Algebras, The Tame Generators Problem, Endomorphisms.  

\section{Introduction}\label{A1}
For a field $\Bbbk$, let $A:=\Bbbk\langle x_1,\ldots,x_n \rangle$ be the free associative algebra in $n$ variables and denote by $\mathrm{Aut}A$ the automorphism group of $A$. An automorphism $\psi$ of $A$ is called \textit{elementary} if it is of the form
$$\psi(x_1,\ldots,x_n)\mapsto (x_1,\ldots,x_{i-1},\alpha x_i+F,x_{i+1},\ldots,x_n),$$
where $\alpha\in\Bbbk^*:=\Bbbk\setminus\{0\}$ and $F\in\Bbbk\langle x_1,\ldots,x_{i-1},x_{i+1},\ldots,x_{n} \rangle$. The subgroup of $\mathrm{Aut}A$ generated by the elementary automorphisms is called the \textit{tame subgroup}, and an element of this subgroup is called \textit{tame}. An automorphism of $A$ not belonging to the tame subgroup is called \textit{wild}. Study of the automorphisms of $A$, and of factor algebras of $A$, has been ubiquitous over the last hundred years (for a comprehensive overview see \cite{Essen}). It was shown in \cite{Kulk} and \cite{Makar-Limanov} that the automorphisms groups of the polynomial ring and the free associative algebra in two variables are tame. Along with these results came the following natural problems:   
\begin{description}
\item 
P1\hspace{0.5mm}:~~Is every automorphism of the free associative algebra in $n$ variables tame?
\item
P2\hspace{0.5mm}:~~Is every automorphism of the commutative polynomial ring in $n$ variables tame?
\end{description}
Notably in \cite{Shes}, Nagata's automorphism, proposed in \cite{Nagata}, was shown to be wild yielding a negative answer to P2 in three variables. In \cite{Umirbaev} the Anick automorphism of the free associative algebra in three variables was shown to be wild also giving a negative answer to P1. Both Nagata's automorphism and the Anick automorphism are stably tame (see \cite{Smith}), thus, lifting either automorphism to higher order spaces unfortunately does not produce further wild automorphisms. To the best of the authors' knowledge the tame generators problems P1 and P2 remain unsolved for $n$ greater than $3$ generators. 

In his foundational paper on the Weyl algebra \cite{Dixmier}, Dixmier showed that every automorphism of the first Weyl algebra is tame. Given that the $n^{\mathrm{th}}$ Weyl algebra can be realised as a factor algebra of the free associative algebra in $2n$ variables, the natural Weyl algebra analogue of the tame generators problem follows:   
\begin{description}
\item 
P3\hspace{0.5mm}:~~Is every automorphism of the $n^{\mathrm{th}}$ Weyl algebra tame?  
\end{description}
Again, to the best of the authors' knowledge P3 remains unsolved for $n$ greater than one. Given the existence of wild automorphism in the polynomial ring and free algebra cases, one might suspect that a similar result will follow for higher order Weyl algebras.

In \cite{Launois}, primitive factor algebras of Gelfand-Kirillov dimension 2 of the positive part of the quantized enveloping algebra $U_q(\mathfrak{so}_5)$ were classified. These can be thought of as quantum analogues of the first Weyl algebra. Among those are the algebras $\mathscr{A}_{\alpha,q}$ with $\alpha\in\Bbbk^*$, where $\mathscr{A}_{\alpha,q}$ is the associative algebra in three variables $e_1, e_2,$ and $e_3$, subject to the following commutation relations:
\begin{align*}
& e_1 e_3=q^{-1}e_3 e_1,\\
& e_2 e_3=q e_3 e_2+\alpha,\\
& e_2e_1=q^{-1} e_1 e_2 -q^{-1}e_3,\\
& e_3^2+(q^2-1) e_3 e_1 e_2 +\alpha q(q+1) e_1=0.
\end{align*}
Setting $q=1$ and $\alpha=1$ we indeed get an algebra isomorphic to the first Weyl algebra. In \cite{Launois} these algebras are denoted $A_{\alpha,0}$ and for simplicity, we replace  $q^2$ with $q$. 

Let $\mathscr{H}_{q}^t$ denote the associative algebra with generators $\Omega, \Omega^{-1}, \Psi$ and $\Psi^{\dag}$ subject to the relations 
\begin{align}\label{Hayashi}
& \Omega\Omega^{-1}=\Omega^{-1}\Omega=1,\nonumber\\
& \Psi \Omega=q\Omega\Psi,~~ \Psi^{\dag} \Omega=q^{-1} \Omega \Psi^{\dag},\nonumber\\
&\Psi \Psi^{\dag}=\frac{q^t \Omega^t-q^{-t}\Omega^{-t}}{q^t-q^{-t}},~\mathrm{and}~~ \Psi^{\dag}\Psi=\frac{ \Omega^t-\Omega^{-t}}{q^t-q^{-t}}.
\end{align}
By setting $t=1$ we retrieve the Weyl Hayashi algebra $\mathscr{H}_{q}^1$ studied in \cite{Alev} and \cite{Kirkman}. When $t=2$ we get the original algebras introduced by Hayashi in \cite{Hayashi}. In this article we will consider the generalization $\mathscr{H}_{q}^t$ which covers both conventions. In \cite{Hayashi} Hayashi introduced $\mathscr{H}_{q}^2$ as $q$-analogue of the Weyl algebra to construct oscillator representations of quantum enveloping algebras. In \cite{Kirkman} it was shown that the algebras $\mathscr{H}_{q}^1$ arise as factor algebras of a $q$-analogue of the universal enveloping algebra of the Heisenberg Lie algebra. It was also shown in (\cite{Launois}, Section 3) that $\mathscr{H}_{q}^1$ appears as factor algebras of the positive part of the quantized enveloping algebra $U_q(\mathfrak{so}_5)$. 

The tame generators problems, and in particular P3, makes it natural to consider if the complexity of the automorphism group of quantum analogues of the $n^{\mathrm{th}}$ Weyl algebra fundamentally changes as $n$ increases. In this article we arrive at analogues to the $n^{\mathrm{th}}$ Weyl algebra by taking the tensor product (over the ground field) of $n$ copies of our first Weyl algebra analogues $\mathscr{A}_{\alpha,q}$ and $\mathscr{H}_{q}^t$. By showing that these algebras are part of a family of generalized Weyl algebras that we call {\em quantum Weyl analogue (qwa) algebras}, we are able to define the notion of {\em qwa-tame} (see Section \ref{qtgp} and specifically Definition \ref{def1}). Using our definition we show that the automorphism groups of our analogues are well behaved as we increase the number of tensor copies. Precisely we prove the following quantum analogues to the tame generator problem:
\begin{theorem}\label{A2}
Every automorphism of $\mathscr{A}_{\alpha,q}\otimes\cdots \otimes \mathscr{A}_{\alpha,q}$ is qwa-tame for $\alpha\in\Bbbk^*$ and $q\in \Bbbk^*\setminus\{z|z^2= 1 \}$. 
\end{theorem}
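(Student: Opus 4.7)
The plan is to establish Theorem \ref{A2} by exploiting the generalized Weyl algebra (GWA) structure of $\mathscr{A}_{\alpha,q}^{\otimes n}$, which presumably underlies the definition of qwa-tame in the subsequent sections. First I would exhibit $\mathscr{A}_{\alpha,q}$ itself as a degree-one GWA: the relation
$$e_3^2 + (q^2-1)e_3 e_1 e_2 + \alpha q(q+1)e_1 = 0$$
indicates that a PBW basis of the form $\{e_1^{i}e_2^{j}e_3^{k} : k\in\{0,1\}\}$ should be available, with $e_3$ acting as a GWA generator over a base subalgebra carrying commuting images of $e_1$ and $e_2$. Tensoring $n$ copies of this structure yields a multi-dimensional GWA presentation of $\mathscr{A}_{\alpha,q}^{\otimes n}$ over a quantum affine space in the variables $e_1^{(s)}, e_2^{(s)}$ for $s=1,\ldots,n$; this is exactly the framework in which qwa-elementary automorphisms are defined.

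The central step is to analyze an arbitrary automorphism $\phi$. I would first compute the invertible and normal elements of $\mathscr{A}_{\alpha,q}^{\otimes n}$ using the PBW basis, then localize at the Ore set generated by the normal elements $e_1^{(s)}$ to embed the algebra in an overalgebra closely related to a quantum torus. Any $\phi$ extends to this localization because normal elements must map to normal elements up to units. Under the stated genericity assumption on $q$, automorphism groups of such quantum tori are well understood (Osborn--Passman-type results): they consist of scalar rescalings, permutations of tensor factors, and linear actions on the grading group that stabilize the commutation matrix. The rigidity of that matrix in our situation should leave only a small, explicit list of possibilities.

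Pulling this back, I would then compose $\phi$ with an appropriate product of qwa-elementary automorphisms to cancel the action on the normal/invertible generators, reducing to the case where $\phi(e_1^{(s)})$ and $\phi(e_2^{(s)})$ agree (up to scalars) with the identity. The GWA-relation above then heavily constrains $\phi(e_3^{(s)})$, because $\phi(e_3^{(s)})^2$ is forced to lie in a very specific one-dimensional coset of the base ring. The principal obstacle will be the second step: showing that every automorphism of the quantum-torus localization arising from $\phi$ actually restricts to $\mathscr{A}_{\alpha,q}^{\otimes n}$, and that the residual action after killing normal-element rescalings falls within the qwa-elementary subgroup. This will require a careful filtration/degree argument on the PBW basis, making essential use of the non-root-of-unity condition on $q$ to prevent unwanted cancellations and to pin down the scalars in each commutation relation.
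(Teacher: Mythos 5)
Your plan is right in spirit — exploit a GWA presentation, control units, and cancel against explicit tame-type automorphisms — but it rests on a misidentified GWA structure and an unproved localization step. You propose that $e_3$ is a GWA generator over a base subalgebra carrying ``commuting images'' of $e_1,e_2$, with a $\{e_1^i e_2^j e_3^k : k\in\{0,1\}\}$ basis. However $e_1$ and $e_2$ do not commute ($e_2e_1=q^{-1}e_1e_2-q^{-1}e_3$), and in the isomorphism the paper actually uses, $\mathscr{A}_{\alpha,q}\simeq A(1,q)=\Bbbk[h^{\pm 1}](\sigma,h-1)$, the roles are the reverse of what you describe: the base ring is a Laurent polynomial ring in a single invertible $h$ (which plays the role of $e_3$), while $x$ and $y$ (essentially $e_1,e_2$) are the two degree-one GWA generators. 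Tensoring $n$ copies gives a degree-$n$ GWA over $\Bbbk[h_1^{\pm 1},\ldots,h_n^{\pm 1}]$, and the definition of qwa-tame (Definition \ref{def1}) is calibrated to precisely this presentation via the three families $\phi_g,\chi_w,\xi_{\underline{u}}$; it is not formulated in terms of a quantum affine space in variables $e_1^{(s)},e_2^{(s)}$, so the object you would be reducing to is not the one appearing in the theorem.

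More fundamentally, the central mechanism you propose — localize at an Ore set of normal elements, extend the automorphism to a quantum torus, invoke Osborn--Passman-type classification, and restrict back — leaves open exactly the step you flag: that the extended automorphism restricts to $\mathscr{A}_{\alpha,q}^{\otimes n}$. The paper avoids this entirely. It embeds into a quantum torus only to determine the unit group (Lemma \ref{Unitsl}), then classifies homomorphisms \emph{inside} $A(n,d,\underline{q})$ using the $\mathbb{Z}^n$-grading intrinsic to the GWA (Theorem \ref{corol1}, Steps 1--3): writing $\psi(x_i),\psi(y_i)$ in graded pieces and using that $\psi(x_i)\psi(y_i)=a_d(\widetilde{q}_i\psi(h_i))$ lives in degree $\underline{0}$ forces homogeneity, and evaluation at the roots of $a_d$ then pins the degree to $\pm 1$ in a single tensor slot. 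If you pursue your approach you should replace the localization by this direct grading analysis; and your last step — ``the residual action falls within the qwa-elementary subgroup'' — is the conclusion that has to be exhibited, as the paper does by writing $\psi=\chi_w G^{-1}\xi_{\underline{U}}^{-1}$ explicitly in Corollary \ref{Cii}.
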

\begin{theorem}\label{A3}
Every automorphism of $\mathscr{H}_{q}^t\otimes\cdots \otimes \mathscr{H}_{q}^t$ is qwa-tame for $\alpha\in\Bbbk^*$ and $q\in \Bbbk^*\setminus\{z|z^{2t}= 1 \}$.          
\end{theorem}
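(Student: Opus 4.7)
The plan is to exploit the identification of $\mathscr{H}_q^t$ as a generalized Weyl algebra over the Laurent polynomial ring $R:=\Bbbk[\Omega^{\pm 1}]$, so that the tensor power $B:=(\mathscr{H}_q^t)^{\otimes n}$ fits into the qwa framework set up in Section \ref{qtgp}. Write $\Omega_1,\ldots,\Omega_n$ and $\Psi_i,\Psi_i^{\dag}$ for the natural tensor generators, and let $T:=\Bbbk[\Omega_1^{\pm 1},\ldots,\Omega_n^{\pm 1}]$ be the commutative Laurent subring they generate. The first step is to describe the units of $B$: a standard degree / GWA argument, using that $\Psi_i\Psi_i^{\dag}$ and $\Psi_i^{\dag}\Psi_i$ are non-units of $R_i$ and that the hypothesis $q^{2t}\neq 1$ prevents $\Psi_i,\Psi_i^{\dag}$ from becoming units in any localization, should yield $B^{\times}=\Bbbk^{\times}\cdot\{\Omega_1^{m_1}\cdots\Omega_n^{m_n}\}$.

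Given this, for any $\phi\in\mathrm{Aut}(B)$ the image $\phi(\Omega_i)$ is a unit, hence of the form $\lambda_i\Omega_1^{a_{i1}}\cdots\Omega_n^{a_{in}}$ with $\lambda_i\in\Bbbk^{\times}$ and $a_{ij}\in\mathbb{Z}$. So $\phi$ restricts to an automorphism of $T$, encoded by an invertible integer matrix $A=(a_{ij})\in\mathrm{GL}_n(\mathbb{Z})$. The next step is to show $A$ is a signed permutation matrix. For this I would decompose $B$ into $T$-weight spaces under conjugation: the character $\chi_i:\Omega_j\mapsto q^{\delta_{ij}}\Omega_j$ has weight space (essentially) $T\Psi_i$, and $\chi_i^{-1}$ has weight space $T\Psi_i^{\dag}$. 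Since $\phi$ must permute the set of weights that actually occur, and these weights are precisely the $\pm\chi_i$, the matrix $A$ must permute the standard basis of $\mathbb{Z}^n$ up to signs. The non-root-of-unity hypothesis is essential here to ensure the weight spaces are distinct.

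With this reduction, $\phi(\Omega_i)=\lambda_i\Omega_{\sigma(i)}^{\epsilon_i}$ for a permutation $\sigma\in S_n$ and signs $\epsilon_i\in\{\pm 1\}$, and $\phi(\Psi_i)$ (respectively $\phi(\Psi_i^{\dag})$) lies in the corresponding rank-one $T$-submodule, so $\phi(\Psi_i)=t_i\Psi_{\sigma(i)}^{\epsilon_i'}$ for some $t_i\in T$ (with $\Psi^{-}$ meaning $\Psi^{\dag}$). The defining relations (\ref{Hayashi}) then force tight constraints on $t_i$: the $q$-commutation with $\phi(\Omega_j)$ pins down the weight of $t_i$ up to a scalar, and the two relations $\Psi\Psi^{\dag}=\ldots$, $\Psi^{\dag}\Psi=\ldots$ further constrain the scalar. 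One then verifies that any such $\phi$ is a composition of the elementary qwa-automorphisms listed in Definition \ref{def1}, namely permutations of tensor factors, sign-involutions swapping $\Psi_i\leftrightarrow\Psi_i^{\dag}$ and inverting $\Omega_i$, and diagonal rescalings by $T$-units.

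The main obstacle will be the rigidity step: proving that the matrix $A$ is a signed permutation matrix, rather than an arbitrary element of $\mathrm{GL}_n(\mathbb{Z})$. This is where the hypothesis $q^{2t}\neq 1$ is indispensable, as otherwise one can construct genuinely mixing automorphisms that realize non-monomial matrices. A secondary, more bookkeeping, obstacle is ruling out nontrivial $T$-coefficients $t_i$ by squeezing them between the two GWA relations in (\ref{Hayashi}); this is where the explicit form of the polynomials $(\Omega^t-\Omega^{-t})/(q^t-q^{-t})$ enters, and its irreducibility (or suitable weak substitute) under the standing assumption on $q$ is what makes the argument go through.
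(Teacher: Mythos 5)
Your strategy follows the paper's general framework (units $\Rightarrow$ action on $\Omega_i$'s, then constrain images of $\Psi_i,\Psi_i^{\dag}$, then decompose into elementary qwa-automorphisms), but there is a genuine gap in the rigidity step, which you yourself flag as the crux.

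Specifically, your argument that the matrix $A\in\mathrm{GL}_n(\mathbb{Z})$ must be a signed permutation rests on the claim that ``the weights that actually occur [under $T$-conjugation] are precisely the $\pm\chi_i$.'' This is false: the $\mathbb{Z}^n$-grading $B=\bigoplus_{\underline{k}}T\,w_{\underline{k}}$ shows that \emph{every} $\underline{k}\in\mathbb{Z}^n$ is realized as a weight (each graded piece is a nonzero free $T$-module, and this has nothing to do with $q$ not being a root of unity). The weight computation only tells you that if $\phi(\Psi_i)$ lies in degree $\underline{k}_i$, then the matrix $K$ with rows $\underline{k}_i$ satisfies an orthogonality relation of the form $AK^T=\pm I$. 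Since $A$ is an arbitrary unimodular matrix, this alone does not force $A$ (or $K$) to be a signed permutation. The conjugation relations are insensitive to the polynomial relations $\Psi\Psi^{\dag}=\cdots$ and $\Psi^{\dag}\Psi=\cdots$, which is exactly where the real constraint lives.

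The paper closes this gap by a very different, essentially arithmetic argument (Steps~1--2 of the proof of Theorem~\ref{corol1}): apply $\psi$ to $\widetilde{x_i}\widetilde{y_i}=a_d(\widetilde{q_i}\widetilde{h_i})$, commute everything into a Laurent polynomial, and obtain a product
\[
U_i(h_1,\ldots,h_n)\prod_{s,l}\bigl(q_s^{\pm ld}h_s^d-1\bigr)=a_d\bigl(\widetilde{q_i}\gamma_i h_1^{m_{i,1}}\cdots h_n^{m_{i,n}}\bigr),
\]
then evaluate at carefully chosen roots of the left-hand factors. The evaluation kills all but one exponent $m_{i,j}$, and a second evaluation (together with the non-root-of-unity hypothesis) forces the graded degree $k_j$ of $\psi(\widetilde{x_i})$ to equal $1$. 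This factorization/evaluation technique is what you would have to substitute for your weight-space rigidity claim; as you have written it, you only invoke the polynomial relations \emph{after} already assuming $\phi(\Psi_i)=t_i\Psi_{\sigma(i)}^{\pm}$, which is the very statement that needs proof.

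A secondary remark: your target list of elementary automorphisms (permutations, sign-involutions, $T$-unit rescalings) is slightly narrower than Definition~\ref{def1}, which allows an arbitrary automorphism $g$ of a single factor $\Bbbk[h^{\pm 1}](\sigma,h^d-1)$ in the generator $\phi_g$. That is not a real obstacle, since the sign-involution \emph{is} such a $g$, but you should make the correspondence explicit when assembling the final decomposition, as the paper does in the proof of Corollary~\ref{Cii}.
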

In general computing the automorphism group of an algebra can be very difficult. Recently some progress has been made to produce a uniform approach to this problem for a large class of algebras (see \cite{Ceken1}). In \cite{Ceken2} the same authors use their approach to show that the automorphism group of tensor products of the so-called q-quantum Weyl algebra is tame. Theorem \ref{A2} and Theorem \ref{A3} can be seen as a direct analogue to \cite[Theorem 2]{Ceken2}.    

Dixmier also made the now famous conjecture: Every endomorphism of the $n^{\mathrm{th}}$ Weyl algebra is an automorphism. Tsuchimoto, \cite{Tsuchimoto}, and Belov-Kanel and Kontsevich, \cite{Belov}, proved independently that the Dixmier conjecture is stably equivalent to the Jacobian conjecture of Keller \cite{Keller}. It is natural to ask Dixmier's question for related algebras (see \cite{Bavula3,Richard}), and especially generalizations and quantizations of the Weyl algebras (see \cite{Backelin,Lopes}). In \cite{Launois2}, every endomorphism of $\mathscr{A}_{\alpha,q}$ (and more generally simple quantum generalized Weyl algebras), when $q$ is not a root of unity, was shown to be an automorphism. In this article we show that every homomorphism between two of our analogues of $n^{\mathrm{th}}$ Weyl algebra is invertible. Precisely, we prove the following theorems:
\begin{theorem}\label{Them1}
If  $q$ is not a root of unity and $\alpha_i,\widetilde{\alpha}_i\in\Bbbk^*$ for $i\in\{1,\ldots,n\}$, then every homomorphism between $\mathscr{A}_{\alpha_1,q}\otimes\cdots \otimes \mathscr{A}_{\alpha_n,q}~~\mathrm{and}~~\mathscr{A}_{\widetilde{\alpha}_1,q}\otimes\cdots \otimes \mathscr{A}_{\widetilde{\alpha}_n,q}$ is invertible. 
\end{theorem}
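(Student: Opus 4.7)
The plan is to proceed by induction on $n$. The base case $n=1$ is provided by \cite{Launois2}, which establishes that every endomorphism of $\mathscr{A}_{\alpha,q}$ is an automorphism when $q$ is not a root of unity. Since the argument there rests on the simple GWA structure common to all the algebras $\mathscr{A}_{\alpha,q}$, it adapts to show that any homomorphism $\mathscr{A}_{\alpha,q}\to\mathscr{A}_{\tilde{\alpha},q}$ is invertible, completing the base case.

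For the inductive step, write $A:=\bigotimes_{i=1}^n \mathscr{A}_{\alpha_i,q}$, $B:=\bigotimes_{i=1}^n \mathscr{A}_{\tilde{\alpha}_i,q}$, and let $\phi\colon A\to B$ be a homomorphism. Because $q$ is not a root of unity, each $\mathscr{A}_{\alpha_i,q}$ is a simple $\Bbbk$-algebra with trivial center, so $A$ and $B$ are central simple over $\Bbbk$ and $\phi$ is automatically injective; the real content of the theorem is therefore surjectivity. I would attack surjectivity by restricting $\phi$ to the first tensor factor, obtaining an embedding $\mathscr{A}_{\alpha_1,q}\hookrightarrow B$, and arguing that, after composing with a suitable qwa-tame automorphism of $B$, this embedding identifies $\mathscr{A}_{\alpha_1,q}$ with a single tensor factor of $B$, say $\mathscr{A}_{\tilde{\alpha}_1,q}$, via an isomorphism provided by the base case. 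The centralizer of $\mathscr{A}_{\tilde{\alpha}_1,q}$ in $B$ is $\bigotimes_{i=2}^n \mathscr{A}_{\tilde{\alpha}_i,q}$, and since $\bigotimes_{i=2}^n \mathscr{A}_{\alpha_i,q}$ centralizes $\mathscr{A}_{\alpha_1,q}$ in $A$, the adjusted $\phi$ restricts to a homomorphism between the smaller tensor products, to which the inductive hypothesis then applies.

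The hardest step, and the main obstacle, is the rigidity claim that $\phi(\mathscr{A}_{\alpha_1,q})$ can be placed inside a single tensor factor of $B$ after a qwa-tame adjustment. This is a statement about how simple subalgebras of GK dimension $2$ of the shape $\mathscr{A}_{\alpha,q}$ can sit inside a tensor product of qwas, and I would approach it using the qwa framework developed earlier in the paper. The key auxiliary data are the distinguished commutative subalgebra $\Bbbk[e_1^{(1)},\ldots,e_1^{(n)}] \subseteq B$ and the grading it induces (the same structure underlying the proof of Theorem \ref{A2}), together with the rigid quadratic relation $e_3^2 + (q^2-1) e_3 e_1 e_2 + \alpha q(q+1) e_1 = 0$, which severely constrains the image of the defining generators. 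A discriminant-type invariant in the spirit of \cite{Ceken1,Ceken2} may additionally help pin down the permutation of tensor factors introduced by $\phi$. Once alignment of a single tensor factor is achieved, the inductive hypothesis combined with a Gelfand-Kirillov dimension comparison (both $A$ and $B$ have GK dimension $2n$) yields surjectivity, completing the proof.
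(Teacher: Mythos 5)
Your plan diverges from the paper's, which does not proceed by induction. Instead, the paper classifies \emph{all} homomorphisms $\widetilde{A}(r,d,\underline{\widetilde{q}})\to A(n,d,\underline{q})$ in one pass (Theorem \ref{corol1}), using the unit group (Lemma \ref{Unitsl}) and the $\mathbb{Z}^n$-grading, and then specializes to $r=n$, $q$ a non-root of unity (Corollary \ref{qdaa}); the inverse of an endomorphism is then written down \emph{globally} and explicitly (Corollary \ref{cor2b}). Your argument has a genuine gap precisely at the step you flag as the hardest: the rigidity claim that, after a qwa-tame adjustment, $\phi(\mathscr{A}_{\alpha_1,q})$ becomes a single tensor factor of $B$. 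That claim is not achievable factor by factor. By the paper's Theorem \ref{corol1}, the image of the first factor is generated by $\gamma_1 h_{w(1)}^{m_1}$, $p_1(h_{w(1)})\,b_1\,h_1^{t_{1,1}}\cdots h_n^{t_{1,n}}\,x_{w(1)}^{1-\tau_1}y_{w(1)}^{\tau_1}$, and the analogous expression for $y_1$; the Laurent-monomial unit $h_1^{t_{1,1}}\cdots h_n^{t_{1,n}}$ may involve \emph{every} $h_j$, so the image genuinely spills across all tensor factors of $B$. The only qwa-tame automorphisms that could absorb this unit are those of type $\xi_{\underline{u}}$, which require the global compatibility $u_i\sigma_i(u_l)=u_l\sigma_l(u_i)$ for all $i\neq l$. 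Taking $u_1$ to be the inverse of the offending unit and $u_j=1$ for $j\geq 2$ forces $\sigma_l(u_1)=u_1$ for all $l\geq 2$, i.e.\ $t_{1,l}=0$ for $l\neq w(1)$, which is exactly the hypothesis you would need to establish and cannot assume. In other words, the straightening automorphism must be chosen using the data of \emph{all} the $e_i$'s at once (which is what the paper's global inverse in Corollary \ref{cor2b} does); restricting $\phi$ to a single factor discards the very information needed to verify the compatibility constraint.

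Two secondary points. First, even if alignment of the first factor were achieved, the last appeal to a Gelfand--Kirillov dimension comparison to conclude surjectivity is not a valid shortcut: GK dimension plus injectivity do not imply surjectivity for algebras of this type (the Dixmier conjecture itself would otherwise be trivial). What actually closes the inductive step, assuming alignment, is that $\mathscr{A}_{\tilde\alpha_1,q}\otimes 1$ and its centralizer $1\otimes\bigotimes_{i\geq 2}\mathscr{A}_{\tilde\alpha_i,q}$ jointly generate $B$; the GK remark is superfluous. Second, note the paper's Remark \ref{rmk:utile}: since $\mathscr{A}_{\alpha,q}\simeq A(1,q)$ for every $\alpha\in\Bbbk^*$, the statement reduces immediately to endomorphisms of $A(1,q)^{\otimes n}$, so the apparatus of two different scalar tuples $\alpha_i,\tilde\alpha_i$ can be discarded at the outset; your proof does not exploit this simplification.
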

\begin{theorem}\label{Them2k}
If $q$ is a non root of unity, then every endomorphism of $\mathscr{H}_{q}^t\otimes\cdots \otimes \mathscr{H}_{q}^t$ is an automorphism.
\end{theorem}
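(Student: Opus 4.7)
The plan is to show that any $\Bbbk$-algebra endomorphism $\phi$ of $H := \mathscr{H}_q^t \otimes \cdots \otimes \mathscr{H}_q^t$ is both injective and surjective. Labelling the generators of the $i$-th factor as $\Omega_i^{\pm 1}, \Psi_i, \Psi_i^\dagger$, I would first establish that $H$ is simple when $q$ is not a root of unity: each factor $\mathscr{H}_q^t$ is a simple $\Bbbk$-algebra via a standard generalized Weyl argument (the $\mathbb{Z}$-grading by $\Omega$-powers admits no nontrivial graded two-sided ideal, because the polynomials $(q^t \Omega^t - q^{-t}\Omega^{-t})/(q^t-q^{-t})$ and $(\Omega^t - \Omega^{-t})/(q^t - q^{-t})$ have no common root once $q$ avoids the specified roots of unity), and a tensor product of central simple $\Bbbk$-algebras is simple. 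Since $\phi(1) = 1$, the two-sided ideal $\ker\phi$ must vanish, giving injectivity.

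For surjectivity, the first step is to identify the unit group $U(H)$. Using the $\mathbb{Z}^n$-grading defined by $\deg \Omega_i = \deg \Psi_i = e_i$ and $\deg \Psi_i^\dagger = -e_i$, together with the PBW-type basis on $H$ and the non-root-of-unity hypothesis, one shows $U(H) = \{\lambda \Omega_1^{k_1}\cdots\Omega_n^{k_n} : \lambda \in \Bbbk^*,\, k_i \in \mathbb{Z}\}$. Hence each $\phi(\Omega_i)$ has the form $\lambda_i \Omega^{\mathbf{a}_i}$ for some $\mathbf{a}_i \in \mathbb{Z}^n$. Applying $\phi$ to the commutation relations $\Psi_j\Omega_i = q^{\delta_{ij}}\Omega_i\Psi_j$ forces the matrix $A = (\mathbf{a}_1|\cdots|\mathbf{a}_n)$ to lie in $GL_n(\mathbb{Z})$ and pins down the $\mathbb{Z}^n$-degree of each $\phi(\Psi_j)$ and $\phi(\Psi_j^\dagger)$. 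Substituting into the two quadratic Hayashi relations and analyzing the resulting polynomial identities in the quantum torus $\Bbbk\langle \Omega_1^{\pm 1},\ldots,\Omega_n^{\pm 1}\rangle$ shows that, up to a permutation $\sigma$ of factors and scalar rescalings, $\phi$ acts as $\Omega_i \mapsto \lambda_i \Omega_{\sigma(i)}$, $\Psi_i \mapsto \mu_i \Psi_{\sigma(i)}$, $\Psi_i^\dagger \mapsto \nu_i \Psi_{\sigma(i)}^\dagger$, from which surjectivity follows immediately.

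The main obstacle is ruling out cross-factor interactions in $\phi(\Psi_i)$ and $\phi(\Psi_i^\dagger)$: a priori these elements could be $\Bbbk$-linear combinations of monomials spread over several tensor factors. Eliminating such mixing requires the interplay between $\mathbb{Z}^n$-homogeneity (which restricts each image to a single multi-degree component), the quadratic Hayashi constraints (rigid polynomial identities in $\Omega^{\pm t}$ that are highly restrictive once both $\Psi_i\Psi_i^\dagger$ and $\Psi_i^\dagger\Psi_i$ must be matched), and the non-root-of-unity hypothesis on $q$ (which prevents degenerate solutions where distinct $q$-powers coincide). A degree-by-degree inspection within each graded component, possibly supplemented by a secondary filtration by total $(\Psi,\Psi^\dagger)$-length, is where I expect the bulk of the technical work to lie.
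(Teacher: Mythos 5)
Your injectivity step is sound and, notably, is a simplification the paper does not use: $\mathscr{H}_q^t \simeq \Bbbk[h^{\pm 1}](\sigma, h^{2t}-1)$ with $\sigma(h)=qh$ is indeed simple with center $\Bbbk$ when $q$ is not a root of unity (no proper $\sigma$-invariant ideal, and $h^{d}-1$ and $q^{id}h^{d}-1$ are coprime for $i\neq 0$), so the tensor power is simple and $\ker\phi=0$ is automatic. The paper instead constructs a two-sided inverse, getting injectivity for free, so your route would give a modest shortcut if the rest held up.

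The gap is in the surjectivity step. Your claimed final form of $\phi$ -- that it sends $\Omega_i\mapsto\lambda_i\Omega_{\sigma(i)}$, $\Psi_i\mapsto\mu_i\Psi_{\sigma(i)}$, $\Psi_i^\dagger\mapsto\nu_i\Psi_{\sigma(i)}^\dagger$ with scalars $\lambda_i,\mu_i,\nu_i$ -- is too restrictive, and the rest of the argument rests on it. Two families of endomorphisms are missing. First, an endomorphism can carry $\Omega_i\mapsto\gamma_i\Omega_{\sigma(i)}^{-1}$ and swap $\Psi_i\leftrightarrow\Psi_i^\dagger$ (the $\tau_i=1$ branch in the paper's Corollary~4.1); the two quadratic Hayashi relations do not rule this out, they merely force $\gamma_i^{d}=q^{-d}$. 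Second, the coefficient in front of $\Psi_{\sigma(i)}$ need not be a scalar: already for $n=1$ the map $\Omega\mapsto\Omega$, $\Psi\mapsto\Omega^{m}\Psi$, $\Psi^\dagger\mapsto\Psi^\dagger\Omega^{-m}$ is an automorphism for every $m\in\mathbb{Z}$, and for $n>1$ the unit can be a Laurent monomial in several $\Omega_j$ subject to a $q$-commutation constraint between the different $i$ (Equation~(\ref{theoeq2}) / (\ref{efjh}) of the paper). Your filtration/grading argument would detect these: the graded component of $\phi(\Psi_i)$ sits in $\Bbbk[\Omega_1^{\pm 1},\ldots,\Omega_n^{\pm 1}]\Psi_{\sigma(i)}$ (or $\cdots\Psi_{\sigma(i)}^\dagger$), and the quadratic relations only force the $\Omega$-coefficient to be a unit whose product with its partner is $(-\Omega_{\sigma(i)}^{-d})^{\tau_i}$, not to be a scalar. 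With the correct classification in hand, surjectivity no longer ``follows immediately'': you must write down a candidate inverse (inverting the permutation, the sign flips, and the unit monomials), and then verify that this candidate actually satisfies the defining relations, in particular the compatibility constraint between the units attached to different tensor slots. That verification is the actual content of the paper's Corollary~\ref{cor2b}, and it is not automatic from the shape of $\phi$.
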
 

In parallel to the pathology often encountered when considering algebras over nonzero characteristic fields, in quantum algebra, considering quantizations at roots of unity can be equally problematic. Given the current interest in reduction modulo $p$ techniques and results in the context of differential operators (see for instance \cite{Belov2,Kontsevich}), it is natural to extend the work in \cite{Launois2} to study the endomorphisms of quantum generalized Weyl algebras when $q$ is a root of unity. This case can be thought of as the quantum analogue of reduction modulo $p$ (see for instance \cite{Backelin}). Thus, we extend the classification of endomorphisms used in the proof of Theorem 1.1 of \cite{Launois2}, to include the case where $q$ is a root of unity other than $\pm 1$. We show that there exist non-invertible endomorphisms in this case (see Corollary \ref{endi}).   

\section{Preliminaries}\label{Prel}
To prove our quantum analogues of the tame generators problem and the Dixmier conjecture, we will exploit that the algebras $\mathscr{A}_{\alpha,q}$ and $\mathscr{H}_{q}^t$ are isomorphic to generalized Weyl algebras of degree 1. Our strategy will then be to classify the homomorphisms between tensor products of these algebras. 

Recall that for a $\Bbbk$-algebra $R$, a ($\Bbbk$-algebra) automorphism $\sigma$ of $R$, and a central element of $R,$ say $a$, the generalized Weyl algebra $R(\sigma,a)$ of degree 1 is the algebra extension of $R$ by the two indeterminates $x$ and $y$ subject to the relations 
$$xy=\sigma(a),~~yx=a,~~xr=\sigma(r)x,~\mathrm{and}~~yr=\sigma^{-1}(r)y~~\mathrm{for~all}~r\in R.$$
The isomorphisms and automorphisms of generalized Weyl algebras of degree 1 have been widely examined (see \cite{Bavula1,Richard1,Vivas}).
For $d \in\mathbb{N}^*$, $q\in\Bbbk^*\setminus \{z|z^d= 1 \}$ and $\sigma\in \mathrm{Aut}(\Bbbk[h^{\pm 1}])$ such that $\sigma(h)=qh$, we denote by $A(d,q)$ the generalized Weyl algebra $\Bbbk[h^{\pm 1}](\sigma, h^d-1)$. Using Proposition 3.10 of \cite{Launois} and Theorem A of \cite{Vivas} we have that $ \mathscr{A}_{\alpha,q}\simeq A(1,q)$.

\begin{remark}\label{rmk:utile}
Given that $ \mathscr{A}_{\alpha,q}\simeq A(1,q)$ for all $\alpha \in \Bbbk^*$, Theorem \ref{Them1} reduces to proving that every endomorphism of $A(1,q)\otimes\cdots \otimes A(1,q)$ is an automorphism when $q$ is not a root of unity.
\end{remark}

 By the isomorphism which sends
\begin{align*}
&\Omega\mapsto h,~~\Psi\mapsto \frac{q^{-t}h^{-t}}{q^{t}-q^{-t}}x,~\mathrm{and}~~\Psi^{\dag}\mapsto y,
\end{align*}
we have that $\mathscr{H}_{q}^t \simeq A(2t,q)$. Since the algebras $\mathscr{A}_{\alpha,q}$ and $\mathscr{H}_{q}^t$  are analogues of the first Weyl algebra, we can produce analogues, and in the case of $\mathscr{A}_{\alpha,q}$ a quantization, of the $n^{\mathrm{th}}$ Weyl algebra by taking a tensor product, over $\Bbbk$, of $n$ copies of the original algebra. Thus, for $n,d \in\mathbb{N}^*$, $\textbf{q}:=(q_1,\ldots, q_n)\in(\Bbbk^*\setminus\{z|z^t= 1 \})^n$ and $\sigma_i\in \mathrm{Aut}(\Bbbk[h_i^{\pm 1}])$ such that $\sigma_i(h_i)=q_ih_i$, we define the quantum Weyl analogue (qwa) algebras 
$$A(n,d,\textbf{q}):=\displaystyle\bigotimes_{i=1}^n \Bbbk[h_i^{\pm 1}](\sigma_i, h_i^d-1).$$
By extending the above isomorphisms we can realize the algebras $$\mathscr{A}_{\alpha,q_1}\otimes\cdots \otimes \mathscr{A}_{\alpha,q_n}~~\mathrm{and}~~ \mathscr{H}_{q_1,t}\otimes\cdots \otimes \mathscr{H}_{q_n,t}$$ as members of the family of algebras $A(n,d,\textbf{q})$. 

Since the category of generalized Weyl algebras is closed under tensor product, $A(n,d,\textbf{q})$ is a degree $n$ generalized Weyl algebra in the sense of \cite{Bavula2}. For simplicity we fix the notation $N:=\{1,\ldots n \}$ and $a_d(h_i)=h_i^d-1$. Precisely, $A(n,d,\textbf{q})$ is the $\Bbbk$-algebra generated by $x_i,y_i,h_i$ and $h_i^{-1}$ subject to the relations 
\begin{align*}
x_{i} h_{i}=q_{i} h_{i} x_{i}, ~~ &y_{i} h_{i}=q_{i}^{-1}h_{i} y_{i},~~ x_{i} y_{i}=a_d(q_i h_i),~~ y_{i} x_{i}=a_d(h_i),~~ h_{i}^{\pm 1}h_{i}^{\mp 1}=1
\end{align*} 
and the commutation relations
\begin{align}\label{comm}
&h_{i}h_{j}=h_{j}h_{i}, ~~h_{i}x_{j}=x_{j}h_{i},~~h_{i}y_{j}=y_{j}h_{i},\nonumber\\
 &x_{i}x_{j}=x_{j}x_{i},~~x_{i}y_{j}=y_{j}x_{i},~\mathrm{and}~~y_{i}y_{j}=y_{j}y_{i}
\end{align}
for $i,j\in N$ and $i\neq j$.

The property that any degree $n$ generalized Weyl algebra is $\mathbb{Z}^n$-graded is integral to the proof of Theorems \ref{A2} and \ref{Them1}. Thus, we recall this grading from \cite{Bavula4} applying it to $A(n,d,\textbf{q})$. For a vector $\textbf{k}:=(k_1,\ldots,k_n)\in\mathbb{Z}^n$ we set $w_{\textbf{k}}:=w_{k_1}(1)\cdots w_{k_n}(n)$, where for $i\in N$ and $m\geq 0$ we have $$w_{m}(i)=x_{i}^m,~ w_{-m}(i)=y_{i}^m, ~\mathrm{and} ~~ w_0(i)=1 .$$ It follows from the relations of $A(n,d,\textbf{q})$ that 
\begin{equation}\label{Grading}
A(n,d,\textbf{q}):=\bigoplus_{\textbf{k}\in{\mathbb{Z}^n}}A_{(\textbf{k})}
\end{equation}
is a $\mathbb{Z}^n$-graded algebra, where $A_{(\textbf{k})}:= \Bbbk[h_{1}^{\pm 1},\ldots,h_{n}^{\pm 1}] w_{\textbf{k}}=w_{\textbf{k}}\Bbbk[h_{1}^{\pm 1},\ldots,h_{n}^{\pm 1}].$

When classifying automorphisms or isomorphisms, it can often be illuminating to consider normal elements, since normality is preserved by invertible homomorphisms. Indeed, this approach was adopted in \cite{Richard1} to classify, up to isomorphism, quantum generalized Weyl algebras over a polynomial ring. For general homomorphisms, normality is not preserved. Instead, we exploit that any homomorphism maps invertible elements to invertible elements. It is clear that the algebras $A(n,d,\textbf{q})$ have non-trivial units since the component generalized Weyl algebras are defined over Laurent polynomial rings. We will now state the classification of units for the algebras $A(n,d,\textbf{q})$.
\begin{lemma}\label{Unitsl}
Any unit in $A(n,d,\textbf{\emph{q}})$ is of the form $\gamma h_{1}^{m_{1}} \cdots h_{n}^{m_{n}},$ where $\gamma \in\Bbbk^*$ and $m_{1},\ldots,m_{n} \in\mathbb{Z}$.
\end{lemma}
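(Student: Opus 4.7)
The plan is to embed $A(n,d,\underline{q})$ into the quantum torus $\mathcal{T}_{\underline{q}}$, where units are readily seen to be scalar multiples of monomials, and then use the $\mathbb{Z}^n$-grading of $A(n,d,\underline{q})$ together with a UFD argument in $\Bbbk[v_1^{\pm 1},\ldots,v_n^{\pm 1}]$ to force any unit to lie in the Laurent polynomial subring $\Bbbk[h_1^{\pm 1},\ldots,h_n^{\pm 1}]$.

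First I would exhibit an explicit embedding $\varphi\colon A(n,d,\underline{q})\hookrightarrow \mathcal{T}_{\underline{q}}$, defined on each tensor factor by $h_i \mapsto v_i$, $x_i \mapsto u_i$ and $y_i \mapsto u_i^{-1}(q_i^{d}v_i^{d}-1)$. Checking the defining relations of $A(n,d,\underline{q})$ reduces to the single identity $u_i v_i = q_i v_i u_i$ in each $\mathcal{T}_i$ together with commutations across distinct tensor factors. Injectivity follows by expanding images via $(u_i^{-1}(q_i^{d}v_i^{d}-1))^{k} = u_i^{-k}\prod_{j=1}^{k}(q_i^{jd}v_i^{d}-1)$ and comparing supports against the natural basis $\{w_{\underline{k}}\, h_1^{a_1}\cdots h_n^{a_n}\}$ of $A(n,d,\underline{q})$ coming from (\ref{Grading}).

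Next I would identify the units of $\mathcal{T}_{\underline{q}}$: the monomial $\mathbb{Z}^{2n}$-grading makes $\mathcal{T}_{\underline{q}}$ a graded domain with one-dimensional homogeneous components, and a standard leading-and-trailing-term argument forces every unit to be homogeneous, so the units are exactly $\gamma \prod_i u_i^{a_i}v_i^{b_i}$ with $\gamma\in\Bbbk^*$. The same leading-term argument applied to the $\mathbb{Z}^n$-grading of the domain $A(n,d,\underline{q})$ (which is a domain as an iterated GWA of the Laurent polynomial ring by non-zero central elements) shows that any unit $u$ is homogeneous, so $u = w_{\underline{k}}\, p(h_1,\ldots,h_n)$ for some $\underline{k}\in\mathbb{Z}^n$ and Laurent polynomial $p$. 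Applying $\varphi$ then yields
$$\varphi(u)=\Bigl(\prod_{i=1}^{n}u_i^{k_i}\Bigr)\Bigl(\prod_{i:\,k_i<0}\prod_{j=1}^{|k_i|}(q_i^{jd}v_i^{d}-1)\Bigr)p(v_1,\ldots,v_n),$$
which must be a torus monomial. Since each factor $q_i^{jd}v_i^{d}-1$ is a polynomial in $v_i$ with both a non-zero constant term and a non-zero leading coefficient, unique factorization in $\Bbbk[v_1^{\pm 1},\ldots,v_n^{\pm 1}]$ implies that the $v$-part is a Laurent monomial only when no such factor occurs, forcing $k_i\geq 0$ for all $i$ and $p$ to be a Laurent monomial. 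Applying the symmetric conclusion to $u^{-1}$, whose graded degree is $-\underline{k}$, gives $k_i\leq 0$; hence $\underline{k}=0$ and $u = \gamma h_1^{m_1}\cdots h_n^{m_n}$.

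The main obstacle I expect is a clean proof of the injectivity of $\varphi$: the $y_i$-images carry polynomial tails, so one must carefully track leading and trailing $v_i$-coefficients to see that distinct PBW monomials of $A(n,d,\underline{q})$ have linearly independent images in $\mathcal{T}_{\underline{q}}$. Once injectivity is secured, the remaining steps reduce to routine graded-domain and UFD manipulations.
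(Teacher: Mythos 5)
Your proposal is correct and relies on the same central device as the paper: embedding $A(n,d,\underline{q})$ into the quantum torus and then exploiting that units of a quantum torus are nonzero scalar multiples of monomials. The packaging differs in a mild but genuine way. The paper uses \emph{two} embeddings --- $\phi$ (sending $x_i\mapsto u_i$, $y_i\mapsto a_d(v_i)u_i^{-1}$) and its mirror $\phi'$ (sending $y_i\mapsto u_i$, $x_i\mapsto u_i^{-1}a_d(v_i)$) --- and then observes that a unit must simultaneously lie in a graded component $A_{(\underline{r})}$ with $\underline{r}\ge 0$ and in a component $A_{(-\underline{s})}$ with $\underline{s}\ge 0$, forcing $\underline{r}=\underline{s}=0$. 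You instead prove first that any unit of the graded domain $A(n,d,\underline{q})$ is homogeneous, push through a single embedding, use unique factorization in $\Bbbk[v_1^{\pm1},\ldots,v_n^{\pm1}]$ to conclude $k_i\ge 0$ and $p$ a monomial, and then run the same argument on $u^{-1}$ to get $k_i\le 0$. Applying one embedding to both $u$ and $u^{-1}$ is the exact dual of introducing the second embedding $\phi'$, so the net effect is the same; your version makes the homogeneity-of-units step explicit, which the paper leaves implicit in the phrase ``by the embedding $\phi$.''

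One small remark: the injectivity of $\varphi$ you worry about at the end is not actually needed for your argument. You only use that $\varphi$ is a ring homomorphism (so that it sends units to units) together with the explicit formula for $\varphi(w_{\underline{k}}\,p(h))$; the passage back from ``$p$ is a Laurent monomial'' to ``$u=\gamma h_1^{m_1}\cdots h_n^{m_n}$'' happens entirely inside $A(n,d,\underline{q})$, not via inverting $\varphi$. So the obstacle you flag is not an obstacle, and the proof is complete as written.
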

The result is well known for the algebra $A(1,d,\textbf{q})$ (see for example \cite[Lemma 5.1 (i)]{Bavula1}). The proof for the general case follows.
\begin{proof}
Let $u$ be a unit in $A(n,d,\textbf{q})$ with inverse $u^{-1}$. Using that $A(n,d,\textbf{q})$ is $\mathbb{Z}^n$-graded (see Equation (\ref{Grading})) we write 
$$u=\displaystyle\sum_{\textbf{k}\in\mathbb{Z}^n} W_{\textbf{k}}~~ \mathrm{and} ~~u^{-1}=\displaystyle\sum_{\textbf{s}\in\mathbb{Z}^n} V_{\textbf{s}},$$
where $W_{\textbf{k}},V_{\textbf{k}}\in A_{(\textbf{k})}$ for all ${\textbf{k}}\in\mathbb{Z}^n$ (all but a finite number of them being equal to zero). Thus, we have
\begin{equation*}
\left(\displaystyle\sum_{\textbf{k}\in\mathbb{Z}^n} W_{\textbf{k}} \right) \left( \displaystyle\sum_{\textbf{s}\in\mathbb{Z}^n} V_{\textbf{s}} \right)=1.
\end{equation*}
Noting that $\mathbb{Z}^n$ is a totally ordered group and $1\in A_{(\textbf{0})}$, we find that $u=W_{\textbf{k}}$ and $u^{-1}=V_{-\textbf{k}}$ for some $\textbf{k}\in \mathbb{Z}^n$. Since $h^d-1$ is not invertible, we come to the conclusion $\textbf{k}=\textbf{0}$. Therefore any unit of $A(n,d,\textbf{q})$ is a unit of $\Bbbk[h_1^{\pm 1},\ldots, h_n^{\pm 1}]$. By the definition of $A(n,d,\textbf{q})$ we can see that this condition is sufficient.
\end{proof}

\section{Classification of Homomorphisms}\label{Secthe}
Before giving our classification of homomorphisms we introduce for simplicity the following notation. For $0<r\leq n$, let $R:=\{1,\ldots,r\}$, $\widetilde{\textbf{q}}:=(\widetilde{q}_1,\ldots,\widetilde{q}_r  )\in (\Bbbk^*\setminus\{z|z^d= 1 \})^r$, and $\widetilde{A}(r,d,\widetilde{\textbf{q}}):=A(r,d,\widetilde{\textbf{q}})$ distinguishing $\widetilde{A}(r,d,\widetilde{\textbf{q}})$ from $A(n,d,\textbf{q})$ by marking every generator and indeterminate of $\widetilde{A}(r,d,\widetilde{\textbf{q}})$ with a tilde (for example $\widetilde{h_i}$). We attach primes (for example $p_i'$) to expand our choice of notation, this is in noway related to the derivative of an element. Finally, we use the notation $\mathbb{Z}^*:=\mathbb{Z}\setminus\{0\}$ throughout. 

We will now classify the homomorphisms between $\widetilde{A}(r,d,\widetilde{\textbf{q}})$ and $A(n,d,\textbf{q})$. 

\begin{theorem}\label{corol1}
\begin{enumerate}
\item Let $\psi$ be a homomorphism from $\widetilde{A}(r,d,\widetilde{\emph{\textbf{q}}})$ to $A(n,d,\emph{\textbf{q}})$.
\begin{description}
\item (i)~~There exists a partial permutation $w:R\rightarrow N$, $(\tau_1,\ldots,\tau_r)\in\{0,1\}^r$ and $(m_{1},\ldots,m_r)\in(\mathbb{Z}^*)^r$ such that 
\begin{equation}\label{theoeq1}
q_{w(i)}^{(-1)^{\tau_i} m_{i}}=\widetilde{q}_{i}
\end{equation}
for $i\in R.$
\item (ii) There exists a matrix $(t_{i,j})\in\mathcal{M}_{r,n}(\mathbb{Z})$ such that
\begin{equation}\label{theoeq2}
q_{w(i)}^{t_{l,w(i)} (1-\tau_i)} q_{w(i)}^{-t_{l,w(i)}\tau_i} q_{w(l)}^{-t_{i,w(l)}(1-\tau_l)} q_{w(l)}^{t_{i,w(l)}\tau_l}=1
\end{equation}
for all $i,l \in R$.
\item (iii)~~ For $i\in R$ there exist $p_{i}(h_{w(i)}),p_{i}'(h_{w(i)})\in\Bbbk[h_{w(i)}^{\pm1}]$, and $\gamma_i\in\Bbbk^*$ such that 
\begin{equation}\label{theoeq3}
p_{i}(h_{w(i)})p_{i}'(h_{w(i)})a_d(q_{w(i)}^{(1-\tau_i)} h_{w(i)})=a_d(\widetilde{q_{i}}\gamma_i h_{w(i)}^{m_{i}}).
\end{equation}
\item (iv)~~The homomorphism $\psi$ is defined on the generators of $\widetilde{A}(r,d,\widetilde{\emph{\textbf{q}}})$ as follows:
\end{description}
\begin{itemize}
\item $\psi(\widetilde{h_{i}})=\gamma_i h_{w(i)}^{m_{i}},$ where $\gamma_i^d=\widetilde{q_{i}}^{-\tau_i d}$
\item $\psi(\widetilde{x_{i}})=p_{i}(h_{w(i)})b_i h_{1}^{t_{i,1}}\cdots h_{n}^{t_{i,n}} x_{w(i)}^{(1-\tau_i)}y_{w(i)}^{\tau_i}$, where $b_i\in\Bbbk^*$
\item $\psi(\widetilde{y_{i}})=x_{w(i)}^{\tau_i}y_{w(i)}^{(1-\tau_i)} p_{i}'(h_{w(i)}) b_i^{-1}h_{1}^{-t_{i,1}}\cdots h_{n}^{-t_{i,n}}$
\end{itemize} 

\item
Conversely, assume there exist a partial permutation $w:R\rightarrow N$, $(\tau_1,\ldots,\tau_r)\in\{0,1\}^r$, $(m_{1},\ldots,m_r)\in(\mathbb{Z}^*)^r$,$(b_1,\ldots,b_r),(\gamma_1,\ldots,\gamma_r)\in(\Bbbk^*)^r$, a matrix $(t_{i,j})\in \mathcal{M}_{r,n}(\mathbb{Z})$ and $(p_1,\ldots,p_r), (p_1',\ldots,p_r')\in(\Bbbk[h^{\pm 1}])^r$ such that Equations (\ref{theoeq1}), (\ref{theoeq2}) and (\ref{theoeq3}) hold, and $\gamma_i^d=\widetilde{q_{i}}^{-\tau_i d}$ for $i\in R$. Then, there exists a unique homomorphism $\psi_{\textbf{o}}$ (where $\textbf{o}$ encodes the information in the hypothesis) from $\widetilde{A}(r,d,\widetilde{\emph{\textbf{q}}})$ to $A(n,d,\emph{\textbf{q}})$ defined on the generators of $\widetilde{A}(r,d,\widetilde{\emph{\textbf{q}}})$ as follows:   
\begin{itemize}
\item $\psi_{\textbf{o}}(\widetilde{h_{i}})=\gamma_i h_{w(i)}^{m_{i}}$
\item $\psi_{\textbf{o}}(\widetilde{x_{i}}) =p_{i}(h_{w(i)})b_i h_{1}^{t_{i,1}}\cdots h_{n}^{t_{i,n}} x_{w(i)}^{(1-\tau_i)}y_{w(i)}^{\tau_i}$
\item $\psi_{\textbf{o}}(\widetilde{y_{i}}) =x_{w(i)}^{\tau_i}y_{w(i)}^{(1-\tau_i)} p_{i}'(h_{w(i)})b_i^{-1} h_{1}^{-t_{i,1}}\cdots h_{n}^{-t_{i,n}}$
\end{itemize}
\end{enumerate}
\end{theorem}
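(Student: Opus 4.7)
The plan is to handle the two directions separately, with Part 1 being the substantive one. For the forward direction I would first pin down the image of the units $\widetilde{h_i}$, then exploit the $\mathbb{Z}^n$-grading (\ref{Grading}) to extract structural information about the images of the remaining generators.

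First I would apply Lemma \ref{Unitsl} to conclude that $\psi(\widetilde{h_i})=\gamma_i h_1^{m_{i,1}}\cdots h_n^{m_{i,n}}$ for some $\gamma_i\in\Bbbk^*$ and $m_{i,j}\in\mathbb{Z}$. Writing $\psi(\widetilde{x_i})=\sum_{\underline{k}} P_{i,\underline{k}}(h)\,w_{\underline{k}}$ and $\psi(\widetilde{y_i})=\sum_{\underline{k}} Q_{i,\underline{k}}(h)\,w_{\underline{k}}$ along the grading, and using the identity $h_s w_{\underline{k}}=q_s^{-k_s}w_{\underline{k}}h_s$, I would apply $\psi$ to the relation $\widetilde{x_i}\widetilde{h_l}=\widetilde{q_l}^{\delta_{il}}\widetilde{h_l}\widetilde{x_i}$ for every $l\in R$. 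Each nonzero component $P_{i,\underline{k}}$ then must satisfy $\prod_s q_s^{m_{l,s}k_s}=\widetilde{q_l}^{\delta_{il}}$ for every $l$, and analogous equations hold for the $Q_{i,\underline{k}}$.

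The critical step is to force $\psi(\widetilde{x_i})$ to be concentrated in a single grading degree $\underline{k}=\pm e_{w(i)}$ and, correspondingly, to show that $\psi(\widetilde{h_i})$ involves only the single variable $h_{w(i)}$. My approach is to combine the grading constraints just derived with the multiplicative relations $\widetilde{y_i}\widetilde{x_i}=\widetilde{h_i}^d-1$ and $\widetilde{x_i}\widetilde{y_i}=\widetilde{q_i}^d\widetilde{h_i}^d-1$ (whose right-hand sides lie in $A_{(\underline{0})}$) and with the cross-relations $\widetilde{x_i}\widetilde{x_l}=\widetilde{x_l}\widetilde{x_i}$, $\widetilde{x_i}\widetilde{y_l}=\widetilde{y_l}\widetilde{x_i}$, $\widetilde{y_i}\widetilde{y_l}=\widetilde{y_l}\widetilde{y_i}$ for $i\neq l$. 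Matching degrees in the product relations identifies each nonzero $P_{i,\underline{k}}$ with a nonzero $Q_{i,-\underline{k}}$, while the cross-commutativity rules out multi-degree supports and forces the partial permutation structure $w:R\to N$ with signs $\tau_i\in\{0,1\}$. With $\underline{k}=(-1)^{\tau_i}e_{w(i)}$, the equation for $l=i$ collapses to (\ref{theoeq1}), and those for $l\neq i$ give (\ref{theoeq2}) once the degrees of $\psi(\widetilde{x_l})$ are also identified. Substituting the resulting forms $\psi(\widetilde{x_i})=P_{i,\underline{k}}(h)\cdot(\text{monomial in }h)\cdot x_{w(i)}^{1-\tau_i}y_{w(i)}^{\tau_i}$ and $\psi(\widetilde{y_i})$ into $\widetilde{y_i}\widetilde{x_i}=\widetilde{h_i}^d-1$ and $\widetilde{x_i}\widetilde{y_i}=\widetilde{q_i}^d\widetilde{h_i}^d-1$, and reading off the leading and trailing coefficients in $h_{w(i)}$, produces (\ref{theoeq3}) and the scalar constraint $\gamma_i^d=\widetilde{q_i}^{-\tau_i d}$. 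The polynomials $p_i,p_i'$ and scalar $b_i$ appear as the corresponding factorization data.

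For Part 2 (converse) the verification is a direct calculation. I would set $\psi_{\underline{o}}$ on generators by the displayed formulas and check each defining relation of $\widetilde{A}(r,d,\underline{\widetilde{q}})$ in turn: the $\widetilde{h_i}$-commutation relations reduce to (\ref{theoeq1}); the commutation relations between different tensor factors of $\widetilde{A}$ reduce to (\ref{theoeq2}) together with the commutativity (\ref{comm}) of the $h$'s; and the two product relations $\widetilde{x_i}\widetilde{y_i}=a_d(\widetilde{q_i}\widetilde{h_i})$ and $\widetilde{y_i}\widetilde{x_i}=a_d(\widetilde{h_i})$ reduce to (\ref{theoeq3}) combined with $\gamma_i^d=\widetilde{q_i}^{-\tau_i d}$. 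The hard part of the entire argument, as indicated above, is the concentration step: purely from the $h$-commutation relations the support of $\psi(\widetilde{x_i})$ can be an entire sublattice of $\mathbb{Z}^n$, and only the combined use of the product relations and the cross-commutativity constraints can collapse it to a single standard basis vector and thereby reveal the partial permutation structure.
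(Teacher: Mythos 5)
Your overall plan --- use Lemma \ref{Unitsl} to pin down $\psi(\widetilde{h_i})$, then exploit the $\mathbb{Z}^n$-grading and the defining relations --- is the same skeleton the paper uses, and the converse (Part 2) is correctly described as a direct verification. But the substantive part, the "concentration step," is precisely where your sketch goes wrong, and the mechanism you propose for it would not succeed.

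You claim that the $\widetilde{h}$-commutation relations only restrict the support of $\psi(\widetilde{x_i})$ to an affine sublattice, and that it is \emph{cross-commutativity together with the product relations} that collapses the support to a single degree. That attribution is incorrect. The cross-relations $\widetilde{x_i}\widetilde{x_l}=\widetilde{x_l}\widetilde{x_i}$ etc.\ do not by themselves force either image to be homogeneous --- a sum of several homogeneous pieces can perfectly well commute with another such sum. What actually forces concentration is the product relation $\widetilde{x_i}\widetilde{y_i}=a_d(\widetilde{q_i}\widetilde{h_i})$ \emph{together with the fact that $A(n,d,\underline{q})$ is a $\mathbb{Z}^n$-graded domain}: choosing a total monomial order on $\mathbb{Z}^n$ compatible with addition, the maximal (resp.\ minimal) degree of a product of two nonzero elements is the sum of the maximal (resp.\ minimal) degrees, and if the product lies in $A_{(\underline{0})}$ this forces each factor to be supported in a single degree. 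Your phrase "matching degrees identifies each nonzero $P_{i,\underline{k}}$ with a nonzero $Q_{i,-\underline{k}}$" is strictly weaker than this and leaves open the possibility of multi-degree support with cross-term cancellation; the cross-commutativity constraints you then invoke do not close this gap.

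A second, independent gap: even after concentration you still have $\psi(\widetilde{h_i})=\gamma_i h_1^{m_{i,1}}\cdots h_n^{m_{i,n}}$ a priori involving several of the $h_s$, and the homogeneous degree $\underline{k}$ of $\psi(\widetilde{x_i})$ need not yet be a standard basis vector. The paper resolves both by an evaluation argument: the relation $\widetilde{x_i}\widetilde{y_i}=a_d(\widetilde{q_i}\widetilde{h_i})$ becomes an identity of Laurent polynomials (Equation (\ref{prod1})); substituting $h_j=q_j^{-1}$ (or $h_j=1$) at an index $j$ with $k_j\neq 0$ annihilates the left-hand side and thereby forces $m_{i,s}=0$ for all $s\neq j$, and a similar two-root substitution rules out $k_j>1$. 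Your proposal's "reading off leading and trailing coefficients in $h_{w(i)}$" presupposes that you already know $\psi(\widetilde{h_i})$ is a monomial in the single variable $h_{w(i)}$, which is exactly what this evaluation step is needed to establish. You also need to rule out the case $\underline{k}=\underline{0}$ separately (the paper does this via $\widetilde{x_i}\widetilde{h_i}=\widetilde{q_i}\widetilde{h_i}\widetilde{x_i}$ forcing $\psi(\widetilde{x_i})=0$, then contradicting $\widetilde{y_i}\widetilde{x_i}=a_d(\widetilde{h_i})$), a case your outline does not address. Finally, cross-commutativity \emph{is} used in the paper, but only in Step~3, to show $w$ is injective (a partial permutation) and to derive Equation (\ref{theoeq2}); it is not what drives the concentration.
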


\begin{proof}
We dedicate the rest of Section \ref{Secthe} to the proof of Theorem \ref{corol1}. For ease of understanding we break down our proof into four steps, giving summaries at the beginning and end of each step. Steps 1-3 combine to prove statement 1 of Theorem \ref{corol1}, and Step 4 proves statement 2.    
\end{proof}

\subsection{Step 1}

\textit{In Step 1 we will determine, for $i\in R$, the action of a homomorphism $\psi$ on $\widetilde{h_{i}}$. We will also show that $\psi(\widetilde{x_{i}})\in A_{(\textbf{\emph{k}})}$ and $\psi(\widetilde{y_{i}})\in A_{(-\textbf{\emph{k}})}$, where $A_{(\textbf{\emph{k}})}$ and $A_{(-\textbf{\emph{k}})}$ are as defined in Equation (\ref{Grading}).} \\

Let $\psi$ be a homomorphism from $\widetilde{A}(r,d,\widetilde{\textbf{q}})$ to $A(n,d,\textbf{q})$. Since units are preserved by homomorphisms, from Lemma \ref{Unitsl} we deduce that, for all $i\in R,$ $$\psi(\widetilde{h_{i}})=\gamma_i h_{1}^{m_{i,1}}\cdots h_{n}^{m_{i,n}},$$ where $\gamma_i \in\Bbbk^*,$ and $m_{i,1},\ldots,m_{i,n} \in\mathbb{Z}$. 

We first prove that for all $i\in R$ there exists $l\in N$ such that $m_{i,l}\neq 0$. By contradiction assume $m_{i,1}=\ldots=m_{i,n}=0$. Applying $\psi$ to the relations $\widetilde{x_{i}}\widetilde{h_{i}}=\widetilde{q_{i}}\widetilde{h_{i}}\widetilde{x_{i}}$ and $\widetilde{y_{i}}\widetilde{h_{i}}=\widetilde{q_{i}}^{-1}\widetilde{h_{i}}\widetilde{y_{i}}$ we find that $\psi(\widetilde{x_{i}})=0=\psi(\widetilde{y_{i}})$. Applying $\psi$ to $\widetilde{y_{i}}\widetilde{x_{i}}=a_d(\widetilde{h_{i}})$ and $\widetilde{x_{i}}\widetilde{y_{i}}=a_d(\widetilde{q_{i}}\widetilde{h_{i}})$ gives us that
\begin{equation}\label{eqala}
a_d(\gamma_{i})=0=a_d(\widetilde{q_{i}} \gamma_{i})
\end{equation}
implying that $\widetilde{q}_i^{~d}=1$ contradicting our assumption. Hence for all $i\in R$ there exists $l\in N$ such that $m_{i,l}\neq 0$. 

Applying $\psi$ to the relation $\widetilde{x_{i}} \widetilde{y_{i}}=a_d (\widetilde{q_{i}} \widetilde{h_{i}})$ we get 
\begin{equation}\label{Eqak}
\psi(\widetilde{x_{i}}) \psi( \widetilde{y_{i}})=a_d(\widetilde{q_{i}} \gamma_i h_{1}^{m_{i,1}}\cdots h_{n}^{m_{i,n}}).
\end{equation}
Using that $A(n,d,\textbf{q})$ is $\mathbb{Z}^n$-graded (see Equation (\ref{Grading})) we write 
$$\psi(\widetilde{x_{i}})=\displaystyle\sum_{\textbf{k}\in\mathbb{Z}^n} W_{\textbf{k}}~~ \mathrm{and} ~~\psi(\widetilde{y_{i}})=\displaystyle\sum_{\textbf{s}\in\mathbb{Z}^n} W'_{\textbf{s}}$$
where $W_{\textbf{k}},W'_{\textbf{k}}\in A_{(\textbf{k})}$ for all ${\textbf{k}}\in\mathbb{Z}^n$ (and all but a finite number of them being equal to zero). Substituting these expressions into Equation (\ref{Eqak}) yields
\begin{equation}\label{AAA11}
\left(\displaystyle\sum_{\textbf{k}\in\mathbb{Z}^n} W_{\textbf{k}} \right) \left( \displaystyle\sum_{\textbf{s}\in\mathbb{Z}^n} W'_{\textbf{s}} \right)=a_d(\widetilde{q_{i}} \gamma_i h_{1}^{m_{i,1}}\cdots h_{n}^{m_{i,n}}).
\end{equation}
Noting that $a_d(\widetilde{q_{i}} \gamma_i h_{1}^{m_{i,1}}\cdots h_{n}^{m_{i,n}})\in A_{(\textbf{0})}$ we find that $\psi(\widetilde{x_{i}})=W_{\textbf{k}}$ and $\psi(\widetilde{y_{i}})=W'_{-\textbf{k}}$ for some $\textbf{k}\in \mathbb{Z}^n$. Up to reordering the tensor product factors in $A(n,d,\textbf{q})$, it suffices to only consider the case where $\textbf{k}=(k_1,\ldots,k_e,-k_{e+1},\ldots,-k_n)$ with $k_j \in \mathbb{Z}_{\geq 0}$ for $j\in N$. First consider the case where $\textbf{k}=\textbf{0}$. Thus, $\psi(\widetilde{x_{i}})=P_{i}(h_{1},\ldots,h_{n})$ for $P_{i}(h_{1},\ldots,h_{n})$ a Laurent polynomial in the variables $h_{1},\ldots,h_{n}$. Applying $\psi$ to the relation $\widetilde{x_{i}}\widetilde{h_{i}}=\widetilde{q_{i}}\widetilde{h_{i}}\widetilde{x_{i}}$ implies that $P_{i}(h_{1},\ldots,h_{n})=0$ since $\widetilde{q_{i}}\neq 1$. Now, applying $\psi$ to the relation $\widetilde{y_{i}}\widetilde{x_{i}}=a_d(\widetilde{h_{i}})$ gives us the contradiction $\psi(a_d(\widetilde{h_{i}}))=0$. Thus, there must be at least one nonzero entry in $\textbf{k}$.
We now have    
\begin{align*}
\psi(\widetilde{x_{i}})&=P_{i}(h_{1},\ldots,h_{n})x_{1}^{k_1}\cdots x_{e}^{k_e} y_{e+1}^{k_{e+1}}\cdots y_{n}^{k_{n}}\\
&\mathrm{and}~~\psi(\widetilde{y_{i}})=y_{1}^{k_1}\cdots y_{e}^{k_e}x_{e+1}^{k_{e+1}}\cdots x_{n}^{k_{n}}P'_{i}(h_{1},\ldots,h_{n})
\end{align*}
where $P_{i}(h_{1},\ldots,h_{n})$ and $P'_{i}(h_{1},\ldots,h_{n})$ are nonzero Laurent polynomials in the variables $h_{1},\ldots,h_{n}$. Thus, we can rewrite Equation (\ref{Eqak}) as
\begin{equation}\label{Eq4a}
\begin{aligned}
P_{i}(h_{1},\ldots,h_{n})x_{1}^{k_1}\cdots x_{e}^{k_e} y_{e+1}^{k_{e+1}}\cdots y_{n}^{k_{n}}y_{1}^{k_1}\cdots y_{e}^{k_e}x_{e+1}^{k_{e+1}}\cdots x_{n}^{k_{n}}P'_{i}(h_{1},\ldots,h_{n})&
\\=a_d(\widetilde{q_{i}} \gamma_i h_{1}^{m_{i,1}}\cdots h_{n}^{m_{i,n}})&.
\end{aligned}
\end{equation}
Standard manipulation (see \cite[Equation (5)]{Bavula1}) of Equation (\ref{Eq4a}) gives us that 
$$U_i(h_{1},\ldots,h_{n}) \left( \prod_{s=1}^{e}\prod_{l=1}^{k_s}a_d(q_{s}^{l} h_{s}) \right) \left(\prod_{s=e+1}^{n}\prod_{l=0}^{k_s-1}a_d(q_{s}^{-l} h_{s}) \right)=a_d (\widetilde{q_{i}} \gamma_i h_{1}^{m_{i,1}}\cdots h_{n}^{m_{i,n}})$$
where $U_i(h_{1},\ldots,h_{n})=P_{i}(h_{1},\ldots,h_{n})P'_{i}(h_{1},\ldots,h_{n})$. Using that $a_d(X)=X^d-1$ we get
\begin{equation}\label{prod1}
\begin{aligned}
U_i(h_{1},\ldots,h_{n}) \left( \prod_{s=1}^{e}\prod_{l=1}^{k_s} \left( q_{s}^{l d}  h_{s}^d-1\right) \right)& \left(\prod_{s=e+1}^{n}\prod_{l=0}^{k_s-1}\left( q_{s}^{-l d}  h_{s}^d-1\right) \right)\\
&=a_d (\widetilde{q_{i}} \gamma_i h_{1}^{m_{i,1}}\cdots h_{n}^{m_{i,n}}).
\end{aligned}
\end{equation}
Pick $j\in N$ such that $k_{j}\neq 0$. Evaluating Equation (\ref{prod1}) at $h_{j}=q_{j}^{-1}$ if $j\in \{1,\ldots,e\}$, or $h_{j}=1$ if $j\in \{e+1,\ldots,n\}$, implies that $m_{i,s}=0$ for all $s\in N\setminus\{j\}$. We cannot repeat this process by evaluating at an alternate zero, since we have proved that at least one $m_{i,l}\neq 0$ with $l\in N$. Hence for each $i\in R$ there exists a unique $j\in N$ such that $m_{i,j}\neq 0$. Moreover $\psi(\widetilde{h_{i}})=\gamma_i h_{j}^{m_{i,j}}$. We set $w(i):=j$, but suppress this notation for simplicity until Step 3, where we show that the map $w: R\rightarrow N$ is a partial permutation. Since the double subscript is now redundant we simplify our notation and set $m_{i}:=m_{i,j}$. 

\textit{To summarize, in Step 1 we have shown that, for all $i \in R$, 
\begin{align*}
\psi(\widetilde{h_{i}})=\gamma_i h_{j}^{m_{i}},~~\psi(\widetilde{x_{i}})=P_{i}(h_{1},\ldots,h_{n})x_{1}^{k_1}\cdots x_{e}^{k_e} y_{e+1}^{k_{e+1}}\cdots y_{n}^{k_{n}},\\
\mathrm{and}~~\psi(\widetilde{y_{i}})=y_{1}^{k_1}\cdots y_{e}^{k_e}x_{e+1}^{k_{e+1}}\cdots x_{n}^{k_{n}}P'_{i}(h_{1},\ldots,h_{n})
\end{align*}
where $j=w(i)\in N$, $\gamma_i\in \Bbbk^*$, $m_i\in\mathbb{Z}$ and $P_{i}(h_{1},\ldots,h_{n}),P'_{i}(h_{1},\ldots,h_{n})\in\Bbbk[h_1^{\pm 1},\ldots,h_n^{\pm 1}]$ and $(k_1,\ldots,k_n) \in (\mathbb{Z}_{\geq 0})^n \setminus \{(0, \ldots, 0)\}$. }

\begin{remark}
One can derive from Equation (\ref{prod1}) that all but one of the exponents $k_1, \ldots k_n$ are equal to zero (consider the form of  $a_d$ and the units of $A(n,d,\textbf{q})$). We leave the statement and justification for this implication until Step 2, where it follows clearly from a rewriting of the equation under examination. 
\end{remark}

\subsection{Step 2}
\textit{In Step 2 we will determine precisely, for $i\in R$, the action of a homomorphism on $\widetilde{x_{i}}$ and $\widetilde{y_{i}}$. We also show that $$\gamma_i^d=\widetilde{q_{i}}^{-\tau_i d}.$$}

Using the action of $\psi$ on $\widetilde{h_{i}}$ we found in Step 1 we now rewrite Equation (\ref{prod1}) as     
\begin{equation}
\begin{aligned}\label{prod2}
U_i(h_{1},\ldots,h_{n}) \left( \prod_{s=1}^{e}\prod_{l=1}^{k_s} \left( q_{s}^{ld}  h_{s}^d-1\right) \right)& \left(\prod_{s=e+1}^{n}\prod_{l=0}^{k_s-1}\left( q_{s}^{-ld}  h_{s}^d-1\right) \right)\\
&=a_d (\widetilde{q_{i}} \gamma_i h_{j}^{m_{i}})=(\widetilde{q_{i}} \gamma_i h_{j}^{m_{i}})^d-1.
\end{aligned}
\end{equation}
Since the factors in the product of the left hand side of Equation (\ref{prod2}) are not invertible (discounting $U_i(h_{1},\ldots,h_{n})$), comparing coefficients shows that $k_j$ is the only nonzero entry in $\textbf{k}$. We can also conclude that $U_i(h_{1},\ldots,h_{n})$ is a Laurent polynomial in $h_{j}$ only and write $U_i(h_{1},\ldots,h_{n})=U_i(h_{j})$ to reflect this.

For simplicity we introduce notation to distinguish between the following two cases: Let $\tau_i=0$ if $\textbf{k}=(0,\ldots,k_{j},\ldots,0)$ and $\tau_i=1$ if $\textbf{k}=(0,\ldots,-k_{j},\ldots,0),$ for $k_{j}>0.$ We can now write Equation (\ref{prod2}) as      
\begin{equation}\label{roots3ee}
U_i(h_{j})\prod_{l=1-\tau_i}^{k_{j}-\tau_i} \left (  q_{j}^{(-1)^{\tau_i}l d}  h_{j}^d-1 \right )=\widetilde{q_{i}}^d \gamma_i^d h_{j}^{m_{i}d} -1.
\end{equation}
We will now prove by contradiction that $k_j=1$. Assuming $k_{j}>1$ we find that $q_{j}^{-(-1)^{\tau_i}(1-\tau_i)}$ and $q_{j}^{-(-1)^{\tau_i}(2-\tau_i)}$ are zeros of the left hand side of Equation (\ref{roots3ee}), substituting these yields
$$\left (q_{j}^{-(-1)^{\tau_i}(1-\tau_i)} \right)^{m_{i}d}= \widetilde{q_{i}}^{-d}\gamma_i^{-d}=\left (q_{j}^{-(-1)^{\tau_i}(2-\tau_i)} \right )^{m_{i}d},$$ implying, by simple manipulation, that $q_{j}^{m_{i}d}=1$. Applying $\psi$ to the relation $\widetilde{x_{i}} \widetilde{h_{i}}=\widetilde{q_{i}} \widetilde{h_{i}} \widetilde{x_{i}}$ gives us $$P_{i}(h_{1},\ldots,h_{n})x_{j}^{k_{j}(1-\tau_i)}y_{j}^{k_j\tau_i}\gamma_i h_{j}^{m_{i}}=\widetilde{q_{i}}  \gamma_i h_{j}^{m_{i}}P_{i}(h_{1},\ldots,h_{n})x_{j}^{k_{j}(1-\tau_i)}y_{j}^{k_j\tau_i}.$$ Simple manipulation indicates that
\begin{equation}\label{ete}
q_{j}^{(-1)^{\tau_i}m_{i} k_{j}}=\widetilde{q_{i}}. 
\end{equation}
Equation (\ref{ete}) implies that $q_{j}^{(-1)^{\tau_i}m_{i} k_{j}d}=\widetilde{q_{i}}^d$, and by substituting $q_{j}^{m_{i}d}=1$, we find that $\widetilde{q_{i}}^d=1$ which contradicts our assumptions and thus, $k_j=1$.

Note, since the derivation of Equation (\ref{ete}) did not rely on the assumption that $k_{j}>1$, we have, by substituting $k_{j}=1$,
\begin{equation}\label{rw}
q_{j}^{(-1)^{\tau_i}m_{i}}=\widetilde{q_{i}}.
\end{equation}
Substituting $k_{j}=1$ into Equation (\ref{roots3ee}) gives us 
\begin{equation}\label{roots3ee2}
U_i(h_{j}) \left ( q_{j}^{(-1)^{\tau_i}(1-\tau_i) d}  h_{j}^d-1 \right )= \widetilde{q_{i}}^d \gamma_i^d h_{j}^{m_{i}d} -1.
\end{equation}
Evaluating $h_{j}$ at $q_{j}^{-(-1)^{\tau_i}(1-\tau_i)}$ in Equation (\ref{roots3ee2}) and using Equation (\ref{rw}) we can conclude that
\begin{equation}\label{lhg12}
\gamma_i^d=\widetilde{q_{i}}^{-\tau_i d}.
\end{equation}
Finally, since $U_i(h_{j})$ is a Laurent polynomial in $h_{j}$ we have
$$P_{i}(h_{1},\ldots,h_{n})= p_i(h_{j}) b_i h_{1}^{t_{i,1}}\cdots h_{n}^{t_{i,n}}~~\mathrm{and}~~P'_{i}(h_{1},\ldots,h_{n})=  p_i'(h_{j}) b_i^{-1} h_{1}^{-t_{i,1}}\cdots h_{n}^{-t_{i,n}}$$ where $p_i(h_{j}) p_i'(h_{j})=U_i(h_{j}),b_{i}\in\mathbb{K^*}$ and $t_{i,1},\ldots,t_{i,n} \in\mathbb{Z}$.  
\\

\textit{
To summarize, in Step 2 we have shown that there exist $(\tau_1,\ldots,\tau_r)\in\{0,1\}^r$, $(m_{1},\ldots,m_r)\in(\mathbb{Z}^*)^r$, $(t_{i,l})\in \mathcal{M}_{r,n}(\mathbb{Z})$, $(b_1,\ldots,b_r),~(\gamma_1,\ldots,\gamma_r)\in(\Bbbk^*)^r$ and $(p_1,\ldots,p_r), (p_1',\ldots,p_r')\in(\Bbbk[h^{\pm 1}])^r$ such that, for all $i \in R$,   
\begin{align*}
 \psi(\widetilde{h_{i}})=\gamma_{i} h_{j}^{m_{i}},~\psi(\widetilde{x_{i}})=& p_i(h_{j}) b_i h_{1}^{t_{i,1}}\cdots h_{n}^{t_{i,n}} x_{j}^{(1-\tau_i)}y_{j}^{\tau_i}\\
&\mathrm{and}~~\psi(\widetilde{y_{i}})=x_{j}^{\tau_i}y_{j}^{(1-\tau_i)}p_i'(h_{j}) b_i^{-1} h_{1}^{-t_{i,1}}\cdots h_{n}^{-t_{i,n}},
\end{align*}
and $\gamma_i^d=\widetilde{q_{i}}^{-\tau_i d}$.}

\subsection{Step 3} 
\textit{In Step 3 we will show that the map $w:R\rightarrow N$ from Step 1 is a partial permutation. We will also derive the necessary condition 
$$q_{w(i)}^{t_{l,w(i)} (1-\tau_i)} q_{w(i)}^{-t_{l,w(i)}\tau_i} q_{w(l)}^{-t_{i,w(l)}(1-\tau_l)} q_{w(l)}^{t_{i,w(l)}\tau_l}=1,$$ for $i,l \in R$, which is required to ensure $\psi$ is consistent on the commutation relations of $\widetilde{A}(r,d,\widetilde{\textbf{q}})$ (see Equation (\ref{comm})).}\\

For simplicity we state the action of $\psi$ on $\widetilde{h_{i}}$ and $\widetilde{h_{e}}$ for $i\neq e\in R$:
\begin{equation*}\label{ONE}
\begin{aligned}
 \psi(\widetilde{h_{i}})=\gamma_{i} h_{j}^{m_{i}},~\psi(\widetilde{x_{i}})=& p_i(h_{j}) b_i h_{1}^{t_{i,1}}\cdots h_{n}^{t_{i,n}} x_{j}^{(1-\tau_i)}y_{j}^{\tau_i}\\
&\mathrm{and}~~\psi(\widetilde{y_{i}})=x_{j}^{\tau_i}y_{j}^{(1-\tau_i)}p_i'(h_{j}) b_i^{-1} h_{1}^{-t_{i,1}}\cdots h_{n}^{-t_{i,n}},
\end{aligned}
\end{equation*}
and
\begin{equation*}\label{TWO}
\begin{aligned}
 \psi(\widetilde{h_{e}})=\gamma_{e} h_{k}^{m_{e}},~\psi(\widetilde{x_{e}})=& p_e(h_{k}) b_e h_{1}^{t_{e,1}}\cdots h_{n}^{t_{e,n}} x_{k}^{(1-\tau_e)}y_{k}^{\tau_e}\\
&\mathrm{and}~~\psi(\widetilde{y_{e}})=x_{k}^{\tau_e}y_{k}^{(1-\tau_e)}p_e'(h_{k}) b_e^{-1} h_{1}^{-t_{e,1}}\cdots h_{n}^{-t_{e,n}},
\end{aligned}
\end{equation*}
where for simplicity we set $j:=w(i)$ and $k:=w(e)$. 

First we prove, by contradiction, that $w$ is a partial permutation. Assume $j=k$. Consider when $\tau_i=\tau_e$ (due to the similarity in the calculation we leave the $\tau_i\neq \tau_e$ to the reader (see Remark \ref{resa})). Applying $\psi$ to the relation $\widetilde{x_{i}}\widetilde{y_{e}}=\widetilde{y_{e}}\widetilde{x_{i}}$ yields
\begin{equation}\label{AEs}
\begin{aligned}
 p_i(h_{j}) b_i & h_{1}^{t_{i,1}}\cdots h_{n}^{t_{i,n}}  x_{j}^{(1-\tau_i)}  y_{j}^{\tau_i} x_{j}^{\tau_e} y_{j}^{(1-\tau_e)} p_e'(h_{j}) b_e^{-1} h_{1}^{-t_{e,1}}\cdots h_{n}^{-t_{e,n}}\\
&=x_{j}^{\tau_e}y_{j}^{(1-\tau_e)}p_e'(h_{j}) b_e^{-1} h_{1}^{-t_{e,1}}\cdots h_{n}^{-t_{e,n}}p_i(h_{j}) b_i h_{1}^{t_{i,1}}\cdots h_{n}^{t_{i,n}} x_{j}^{(1-\tau_i)}y_{j}^{\tau_i}. 
\end{aligned}
\end{equation}
Rearrangement of Equation (\ref{AEs}) gives us 
\begin{equation}\label{rw1}
p_i(h_{j})p_e'(h_{j})a_d(q_{j}^{(1-\tau_i)} h_{j})=P(h_j) a_d(q_{j}^{\tau_i}h_{j}).
\end{equation}
where $P(h_j)\in \Bbbk[h_j^{\pm 1}]$. Evaluating Equation (\ref{rw1}) at $h_{j}=q_j^{-\tau_i}$ yields 
$$ p_i(q_j^{-\tau_i})p_e'(q_j^{-\tau_i})a_d(q_j^{1-2\tau_i})=0$$
which implies, since $a_d(q_j^{1-2\tau_i})\neq 0$, that $p_i(q_j^{-\tau_i})=0$ or $p_e'(q_j^{-\tau_i})=0$. Assuming $p_i(q_j^{-\tau_i})=0$ and noting that $p_i(h_{j}) p_i'(h_{j})=U_i(h_{j})$ we get that $U_i(q_j^{-\tau_i})=0$. By evaluating $h_{j}=q_j^{-\tau_i}$ in Equation (\ref{roots3ee2}) we get
$$ \widetilde{q_{i}}^d \gamma_i^d \left(q_j^{-\tau_i}\right)^{m_{i} d}-1=0$$
and by substituting for $\gamma_i^d$ using Equation (\ref{lhg12}) we get
\begin{equation}
\widetilde{q_{i}}^{(1-\tau_i)d}  \left(q_j^{-\tau_i}\right)^{m_{i}d}=1.
\end{equation}
By considering the cases where $\tau_i=0$ and $\tau_i=1$ seperately and using Equation (\ref{rw}) we derive the contradiction $\widetilde{q_i}^d=1$. The case where $p_e'(q_j^{-\tau_i})=0$ follows in exactly the same way. 
\begin{remark}\label{resa}
The case where $\tau_i\neq \tau_e$ differs only insofar as we apply $\psi$ to the relation $\widetilde{x_{i}}\widetilde{x_{e}}=\widetilde{x_{e}}\widetilde{x_{i}}$ to derive our desired contradiction and show that $w$ is a partial permutation.    
\end{remark}
 
Since $j=w(i)\neq k=w(e)$ for all $i\neq e\in R$, the map $w:R\rightarrow N$ is a partial permutation and we have for all $i\in R$ 
\begin{align*}
\psi(\widetilde{h_{i}})=\gamma_i h_{w(i)}^{m_{i}},~\psi(\widetilde{x_{i}})=& P_{i}(h_{w(i)}) h_{1}^{t_{i,1}}\cdots h_{n}^{t_{i,n}} x_{w(i)}^{(1-\tau_i)}y_{w(i)}^{\tau_i}\\
&\mathrm{and}~~\psi(\widetilde{y_{i}})=x_{w(i)}^{\tau_i}y_{w(i)}^{(1-\tau_i)} P_{i}'(h_{w(i)})h_{1}^{-t_{i,1}}\cdots h_{n}^{-t_{i,n}}.
\end{align*}
Finally applying $\psi$ to the relation $\widetilde{x_{i}}\widetilde{x_{l}}=\widetilde{x_{l}}\widetilde{x_{i}}$ (see the commutation relations $(\ref{comm}))$ yields the relation 
$$q_{w(i)}^{t_{l,w(i)} (1-\tau_i)} q_{w(i)}^{-t_{l,w(i)}\tau_i} q_{w(l)}^{-t_{i,w(l)}(1-\tau_l)} q_{w(l)}^{t_{i,w(l)}\tau_l}=1$$ as required.
\\

\textit{
We have completed the proof of part 1 of Theorem \ref{corol1}.}

\subsection{Step 4} 
\textit{
In Step 4 we will show that $\psi_{\textbf{o}}$ defines a homomorphism between $\widetilde{A}(r,d,\widetilde{\emph{\textbf{q}}})$ and $A(n,d,\emph{\textbf{q}})$.  }\\

It suffices to show that $\psi_{\textbf{o}}$ is consistent on the defining relations of $\widetilde{A}(r,d,\widetilde{\textbf{q}})$. For simplicity we set $\psi_{\textbf{o}}:=\psi$. Thus, 

\begin{align*}
\psi(\widetilde{x_{i}})\psi(\widetilde{h_{i}}) =& p_{i}(h_{w(i)})b_i h_{1}^{t_{i,1}}\cdots h_{n}^{t_{i,n}} x_{w(i)}^{(1-\tau_i)}y_{w(i)}^{\tau_i} \gamma_i h_{w(i)}^{m_{w(i)}}\\
=& q_{w(i)}^{m_{w(i)}((1-\tau_i)-\tau_i)} \psi(\widetilde{x_{i}})\psi(\widetilde{h_{i}}).
\end{align*}
By hypothesis we have $q_{w(i)}^{(-1)^{\tau_i} m_{i}}=\widetilde{q_{i}}$ which gives the desired result that $$\psi(\widetilde{x_{i}})\psi(\widetilde{h_{i}})= \widetilde{q_{i}} \psi(\widetilde{x_{i}})\psi(\widetilde{h_{i}}).$$

Next consider
\begin{align}\label{blak}
\psi(\widetilde{y_{i}}) \psi(\widetilde{x_{i}}) &=x_{w(i)}^{\tau_i}y_{w(i)}^{(1-\tau_i)} p_{i}'(h_{w(i)})p_{i}(h_{w(i)})  x_{w(i)}^{(1-\tau_i)}y_{w(i)}^{\tau_i}\nonumber\\
&=p_i(q_{w(i)}^{\tau_i-(1-\tau_i)}h_{w(i)})p_i'(q_{w(i)}^{\tau_i-(1-\tau_i)}h_{w(i)})a_d(q_{w(i)}^{\tau_i}h_{w(i)}).
\end{align} 
By hypothesis we have the equality
\begin{equation}\label{dfd}
p_i(h_{w(i)})p_i'(h_{w(i)})a_d(q_{w(i)}^{(1-\tau_i)} h_{w(i)})=a_d(\widetilde{q_{i}}\gamma_i h_{w(i)}^{m_{i}}).
\end{equation}
Substituting $q_{w(i)}^{\tau_i-(1-\tau_i)}h_{w(i)}$ into Equation (\ref{dfd}) gives us 
\begin{equation*}
p_i(q_{w(i)}^{\tau_i-(1-\tau_i)}h_{w(i)})p_i'(q_{w(i)}^{\tau_i-(1-\tau_i)}h_{w(i)})a_d(q_{w(i)}^{\tau_i} h_{w(i)})=a_d(\gamma_i h_{w(i)}^{m_{i}}).
\end{equation*}
which in combination with Equation (\ref{blak}) yields the desired result that $$\psi(\widetilde{y_{i}}) \psi(\widetilde{x_{i}})=\psi(a_d(\widetilde{h_i})).$$

Similarly consider
\begin{align*}
\psi(\widetilde{x_{i}})\psi(\widetilde{y_{i}})&=p_{i}(h_{w(i)}) x_{w(i)}^{(1-\tau_i)}y_{w(i)}^{\tau_i}x_{w(i)}^{\tau_i}y_{w(i)}^{(1-\tau_i)} p_{i}'(h_{w(i)})\\
&=p_i(h_{w(i)})p_i'(h_{w(i)})x_{w(i)}^{(1-\tau_i)}y_{w(i)}^{\tau_i}x_{w(i)}^{\tau_i}y_{w(i)}^{(1-\tau_i)}\\
&=p_i(h_{w(i)})p_i'(h_{w(i)})a_d(q_{w(i)}^{(1-\tau_i)}h_{w(i)})
\end{align*}
which by the hypothesis stated in Equation (\ref{dfd}) gives
$$\psi(\widetilde{x_{i}})\psi(\widetilde{y_{i}})=a_d(\widetilde{q_{i}}\gamma_i h_{w(i)}^{m_{i}})=\psi(a(\widetilde{q_{i}}\widetilde{h_{i}})).$$

Since the images of $\widetilde{h}_{i}$ and $\widetilde{h}_{l}$ commute (see Equation (\ref{comm})) it is clear that $\psi$ is consistent on the relation $\widetilde{h_{i}}\widetilde{h}_{l}=\widetilde{h}_{l}\widetilde{h_{i}}$. For the same reason, $\psi$ is consistent on the relations $\widetilde{h_{i}}\widetilde{x}_{l}=\widetilde{x}_{l}\widetilde{h_{i}}$ and $\widetilde{h_{i}}\widetilde{y}_{l}=\widetilde{y_{l}}\widetilde{h_{i}}$.

Finally for $i\neq l$, consider 
\begin{align*}
\psi(\widetilde{x_{i}}) \psi(\widetilde{x}_{l})=p_{i}(h_{w(i)})b_i h_{1}^{t_{i,1}}\cdots h_{n}^{t_{i,n}} x_{w(i)}^{(1-\tau_i)}y_{w(i)}^{\tau_i}p_{l}(h_{w(l)})b_l h_{1}^{t_{l,1}}\cdots h_{n}^{t_{l,n}} \widetilde{x}_{w(l)}^{(1-\tau_l)}y_{w(l)}^{\tau_l}
\end{align*}
which after rearrangement and application of the hypothesis $$q_{w(i)}^{t_{l,w(i)} (1-\tau_i)} q_{w(i)}^{-t_{l,w(i)}\tau_i} q_{w(l)}^{-t_{i,w(l)}(1-\tau_l)} q_{w(l)}^{t_{i,w(l)}\tau_l}=1$$ gives us $\psi(\widetilde{x_{i}}) \psi(\widetilde{x_{l}})= \psi(\widetilde{x_{l}})\psi(\widetilde{x_{i}})$. Similarly $\psi(\widetilde{y_{i}}) \psi(\widetilde{y_{l}})= \psi(\widetilde{y_{l}})\psi(\widetilde{y_{i}})$ and $\psi(\widetilde{x_{i}}) \psi(\widetilde{y_{l}})= \psi(\widetilde{y_{l}})\psi(\widetilde{x_{i}})$. By universal property the algebra $\widetilde{A}(r,d,\widetilde{\textbf{q}})$, the map $\psi$ defines an homomorphism from $\widetilde{A}(r,d,\widetilde{\textbf{q}})$ to $A(n,d,\textbf{q}).$   
\\

\textit{
We have completed the proof of part 2 of Theorem \ref{corol1}.}
\\

To conclude this section, we give the general form of an endomorphism of $A(n,d,\textbf{q})$ subject to the technical assumptions in the statements of Theorem \ref{Them1} and Theorem \ref{Them2k}. Recall that $\sigma_i$ is the automorphism of $\Bbbk[h_1^{\pm 1},\ldots, h_n^{\pm 1}]$ defined by $\sigma_i(h_i)=qh_i$, and $\sigma_i(h_j)=h_j$ for $j\neq i$. 
\begin{corollary}\label{qdaa}
Let $\emph{\textbf{q}}=(q,\ldots, q)$ for $q\in\Bbbk^*$ a non root of unity. Then every endomorphism of $A(n,d,\emph{\textbf{q}})$ is of the form:
 \begin{align*}
\psi(h_{i})=\gamma_i h_{w(i)}^{(-1)^{\tau_i}},~~&\psi(x_{i})=e_i x_{w(i)}^{(1-\tau_i)}y_{w(i)}^{\tau_i}, ~\mathrm{and}~~\psi(y_{i})=x_{w(i)}^{\tau_i}y_{w(i)}^{(1-\tau_i)} e_i',
\end{align*} 
where $w$ is a permutation of $N$, $(\tau_1,\ldots,\tau_n)\in\{0,1\}^n$, $(\gamma_1,\ldots,\gamma_n)\in(\Bbbk^*)^n$ such that $\gamma_i^d=q^{-\tau_i d},$ and $e_i,e_i'$ are units of $A(n,d,\emph{\textbf{q}})$, such that $e_{i} e_{i}'=(-1)^{\tau_i}h_{w(i)}^{-d \tau_i}$ and 
\begin{equation}\label{efjh}
e_i \sigma_{w(i)}^{1-2\tau_i}(e_l)=e_l \sigma_{w(l)}^{1-2 \tau_l}(e_i),
\end{equation}
 for all $i\neq l\in N$.
\end{corollary}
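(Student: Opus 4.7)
The plan is to deduce Corollary \ref{qdaa} as a direct specialization of Theorem \ref{corol1}, taking $r=n$, $\widetilde{A}(r,d,\underline{\widetilde{q}})=A(n,d,\underline{q})$, and exploiting that $\underline{q}=(q,\ldots,q)$ with $q$ not a root of unity to sharpen the general classification into the concrete form claimed. First I would observe that the partial permutation $w:N\to N$ produced by Step 3 of Theorem \ref{corol1} is injective, and hence, being a map between finite sets of equal cardinality, must be a full permutation of $N$.

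Next I would feed the assumption that $q$ is not a root of unity into equation (\ref{theoeq1}): it specializes to $q^{(-1)^{\tau_i}m_{i}}=q$, forcing $(-1)^{\tau_i}m_{i}=1$, so $m_{i}=(-1)^{\tau_i}$. This immediately yields $\psi(h_{i})=\gamma_i h_{w(i)}^{(-1)^{\tau_i}}$ with $\gamma_i^d=q^{-\tau_i d}$ as given by Theorem \ref{corol1}(iv). Similarly, equation (\ref{theoeq2}) collapses (after taking logarithms in $q$) to the linear identity $(1-2\tau_i)t_{l,w(i)}=(1-2\tau_l)t_{i,w(l)}$, which is what will drive (\ref{efjh}) later.

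The key technical step is to show that the Laurent polynomials $p_i$ and $p_i'$ of Theorem \ref{corol1} are actually monomials. Substituting the values of $m_i$ and $\widetilde{q_i}=q$ into equation (\ref{theoeq3}) gives
$$p_i(h_{w(i)})p_i'(h_{w(i)})a_d(q^{1-\tau_i}h_{w(i)})=a_d(q\gamma_i h_{w(i)}^{(-1)^{\tau_i}}).$$
A short case analysis on $\tau_i\in\{0,1\}$, combined with $\gamma_i^d=q^{-\tau_i d}$, reduces the right-hand side to $\pm(q^{1-\tau_i}h_{w(i)})^{\pm d}-1$ times a unit, and after cancellation one finds $p_i(h)p_i'(h)=(-1)^{\tau_i}h^{-d\tau_i}$. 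This product is a unit of $\Bbbk[h^{\pm 1}]$; since that ring is an integral domain whose units are precisely Laurent monomials, both $p_i$ and $p_i'$ must themselves be monomials $c_i h^{k_i}$ and $c_i' h^{k_i'}$.

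Having established this, I can define $e_i:=p_i(h_{w(i)})b_i h_1^{t_{i,1}}\cdots h_n^{t_{i,n}}$ and $e_i':=p_i'(h_{w(i)})b_i^{-1}h_1^{-t_{i,1}}\cdots h_n^{-t_{i,n}}$, which are units of $A(n,d,\underline{q})$ by Lemma \ref{Unitsl}. The displayed formulas for $\psi(x_i)$ and $\psi(y_i)$ in the corollary follow immediately, and the relation $e_i e_i'=(-1)^{\tau_i}h_{w(i)}^{-d\tau_i}$ is exactly what the monomiality computation produced. To derive (\ref{efjh}), I would write each $e_l$ as a Laurent monomial in the $h_j$'s and note that $x_{w(i)}^{1-\tau_i}y_{w(i)}^{\tau_i}\cdot e_l=\sigma_{w(i)}^{1-2\tau_i}(e_l)\cdot x_{w(i)}^{1-\tau_i}y_{w(i)}^{\tau_i}$, since $\sigma_{w(i)}$ scales $h_{w(i)}$ by $q$ and fixes every other $h_j$; applying $\psi$ to the commutation relation $x_i x_l=x_l x_i$ and using that strings in $x_{w(i)},y_{w(i)}$ and $x_{w(l)},y_{w(l)}$ commute (for $w(i)\neq w(l)$) then yields (\ref{efjh}), which is equivalent to the exponent identity obtained from (\ref{theoeq2}). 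The main obstacle I expect is the case-by-case arithmetic in the monomiality step, where the exponents involving $\tau_i$, $m_i=(-1)^{\tau_i}$, and $\gamma_i^d$ must be tracked carefully to produce the clean formula $e_i e_i'=(-1)^{\tau_i}h_{w(i)}^{-d\tau_i}$.
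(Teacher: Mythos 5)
Your proposal is correct and follows essentially the same route as the paper's own proof: specialize Theorem~\ref{corol1}, use that $q$ is not a root of unity to force $m_i=(-1)^{\tau_i}$ from Equation~(\ref{theoeq1}), substitute into Equation~(\ref{theoeq3}) to pin down $p_ip_i'=(-h_{w(i)}^{-d})^{\tau_i}$ and hence monomiality, define $e_i,e_i'$, and recover Equation~(\ref{efjh}) by applying $\psi$ to $x_ix_l=x_lx_i$. The paper is terser (it says ``comparing coefficients'' where you argue via units in $\Bbbk[h^{\pm1}]$, and it does not spell out why $w$ is a full permutation), but the underlying ideas and all the key steps coincide.
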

\begin{remark}
By a simple calculation, we can see that when $e_i:=p_{i}(h_{w(i)})b_i h_{1}^{t_{i,1}}\cdots h_{n}^{t_{i,n}}$ and $e_i':=p_{i}'(h_{w(i)})b_i^{-1} h_{1}^{-t_{i,1}}\cdots h_{n}^{-t_{i,n}}$ (as in the statement of Theorem \ref{corol1}), Equation (\ref{efjh}) is equivalent to Equation (\ref{theoeq2}).  

\end{remark}
\begin{proof}
Let $\psi$ be an endomorphism of $A(n,d,\textbf{q})$. By Theorem \ref{corol1} the endomorphism $\psi$ acts on the generators of $A(n,d,\textbf{q})$ as follows:
 \begin{align*}
\psi(h_{i})=\gamma_i h_{w(i)}^{m_{i}},~~&\psi(x_{i})=p_{i}(h_{w(i)})b_i h_{1}^{t_{i,1}}\cdots h_{n}^{t_{i,n}} x_{w(i)}^{(1-\tau_i)}y_{w(i)}^{\tau_i},\\
&\mathrm{and}~~\psi(y_{i})=x_{w(i)}^{\tau_i}y_{w(i)}^{(1-\tau_i)} p_{i}'(h_{w(i)}) b_i^{-1}h_{1}^{-t_{i,1}}\cdots h_{n}^{-t_{i,n}},
\end{align*} 
where the parameters $w, \gamma_i, m_i, t_{i,j}, \tau_i, p_{i}(h_{w(i)}), p_{i}'(h_{w(i)}),$ and $b_i$ are as in the statement of Theorem \ref{corol1} and therefore satisfy Equations (\ref{theoeq1}), (\ref{theoeq2}) and (\ref{theoeq3}). By Equation (\ref{theoeq1}), and since $q$ is not a root of unity, we have that $m_{i}=(-1)^{\tau_i}$ for all $i\in N$. By substituting for $m_i$ in Equation (\ref{theoeq3}) and comparing coefficients of $h_{w(i)}$, we find that $p_i( h_{w(i)})$ and $p_i'(h_{w(i)})$ are monomials in $h_{w(i)}$ such that $p_i( h_{w(i)})p_i'(h_{w(i)})=(-h_{w(i)}^{-d})^{\tau_i}$. Setting $e_i:= p_{i}(h_{w(i)})b_i h_{1}^{t_{i,1}}\cdots h_{n}^{t_{i,n}}$ and $e_i':= p_{i}'(h_{w(i)}) b_i^{-1}h_{1}^{-t_{i,1}}\cdots h_{n}^{-t_{i,n}}$ it is clear $e_{i} e_{i}'=(-h_{w(i)}^{-d})^{\tau_i}$. For simplicity we state an updated form of an endomorphism of $A(n,d,\textbf{q})$:
$$\psi(h_i)=\gamma_i h_{w(i)}^{(-1)^{\tau_i}},~~\psi(x_i)=e_i x_{w(i)}^{(1-\tau_i)}y_{w(i)}^{\tau_i}, ~\mathrm{and}~~\psi(y_i)=x_{w(i)}^{\tau_i}y_{w(i)}^{(1-\tau_i)}e_i'.$$
Finally, Equation (\ref{theoeq2}) is equivalent to the relation $e_i \sigma_{w(i)}^{1-2\tau_i}(e_l)=e_l \sigma_{w(l)}^{1-2 \tau_l}(e_i),$ for all $i\neq l\in N$. This is easily seen by applying $\psi$ to the relation $x_i x_l=x_l x_i$.  
\end{proof}

\section{A Quantum Dixmier Analogue}

Since the algebras $\mathscr{A}_{\alpha,q}\otimes\cdots \otimes \mathscr{A}_{\alpha,q} $ and $\mathscr{H}_{q}^t\otimes\cdots \otimes \mathscr{H}_{q}^t$ are isomorphic to $A(n,1,\textbf{q})$ and $A(n,2t,\textbf{q})$ respectively (see Section \ref{Prel}), Theorems \ref{Them1} and \ref{Them2k} are specializations of the following corollary to Theorem \ref{corol1} (see Remark \ref{rmk:utile} as to why this is sufficient).

\begin{proposition}\label{cor2b}
Let $\emph{\textbf{q}}=(q,\ldots, q)$ for $q\in\Bbbk^*$ a non root of unity. Then, every endomorphism of $A(n,d,\emph{\textbf{q}})$ is an automorphism.   
\end{proposition}
\begin{proof}
Let $\psi$ be defined as in the statement of Corollary $\ref{qdaa}$. We will construct a candidate inverse of $\psi$, say $\phi$, and show that $\phi$ is a endomorphism of $A(n,d,\textbf{q})$. It is clear that $\phi(h_{w(i)})=\gamma_i^{-(-1)^{\tau_i}}h_{i}^{(-1)^{\tau_i}}$ is a well defined automorphism when restricted to $\Bbbk[h_1^{\pm 1},\ldots, h_n^{\pm 1}]$. We propose the following candidate inverse of $\psi$:
\begin{align*}
\phi(h_{w(i)})=\gamma_i^{-(-1)^{\tau_i}}h_{i}^{(-1)^{\tau_i}},~~&\phi(x_{w(i)})=\phi(e_i )^{-(1-\tau_i)} \sigma_i^{-1}(\phi(e_i')^{-\tau_i})x_{i}^{1-\tau_i}y_{i}^{\tau_i},\\
&\mathrm{and}~~\phi(y_{w(i)})=x_{i}^{\tau_i}y_{i}^{1-\tau_i}\sigma_i^{-1}(\phi(e_i )^{-\tau_i})\phi(e_i')^{-(1-\tau_i)}.
\end{align*}   

We will now show that $\phi$ is a well defined endomorphism of $A(n,d,\textbf{q})$ by checking the conditions of Corollary \ref{qdaa}. Since $\gamma_i^d=q^{-\tau_i d}$, a brief computation shows that $(\gamma_i^{-(-1)^{\tau_i}})^d=q^{-\tau_i d}$. Next we show that  
\begin{equation}\label{odsa}
\left( \phi(e_i )^{-(1-\tau_i)} \sigma_i^{-1}(\phi(e_i')^{-\tau_i}) \right) \left( \sigma_i^{-1}(\phi(e_i )^{-\tau_i})\phi(e_i')^{-(1-\tau_i)} \right)=(-h_i^{-d})^{\tau_i}.
\end{equation}
By rearranging the left hand side of Equation (\ref{odsa}) we get $\phi(e_i e_i')^{-(1-\tau_i)} \sigma_i^{-1}(\phi(e_i e_i')^{-\tau_i}),$ which by the substitution $e_i e_i'=(-h_{w(i)}^{-d})^{\tau_i}$ gives 
\begin{equation}\label{pfp}
\phi(-h_{w(i)}^{-d})^{-(1-\tau_i)\tau_i} \sigma_i^{-1}(\phi(-h_{w(i)}^{-d})^{-\tau_i^2}).
\end{equation} 
It is easy to see that when $\tau_i=0,$ Equation (\ref{pfp}) is equal to 1 as required. We set $\tau_i=1$ in Equation (\ref{pfp}) and find  
\begin{equation}
\sigma_i^{-1}(\phi(-h_{w(i)}^{-d})^{-1})=-\sigma_i^{-1}(\gamma_i h_i^{(-1)})^{d}=-q^{-d}\gamma_i^{d}h_i^{d}=-h_i^d
\end{equation} 
as required (note the last step follows by the substitution $\gamma_i^d=q^{-d}$).

Next we show that 
 \begin{equation}\label{equl1}
 \phi(x_{w(i)}) \phi(x_{w(l)}) = \phi(x_{w(l)})\phi(x_{w(i)}).
 \end{equation} 
At this point we return to the notation of Theorem \ref{corol1} and precisely express the units $e_i$ and $e_i'$. We set $e_i:=p_i(h_{w(i)}) h_{w(1)}^{t_{i,w(1)}} \cdots h_{w(n)}^{t_{i,w(n)}}$ and $e_i':=p_i'(h_{w(i)}) h_{w(1)}^{-t_{i,w(1)}} \cdots h_{w(n)}^{-t_{i,w(n)}}$, where $$p_i(h_{w(i)})p'_i(h_{w(i)})=(-h_{w(i)}^{-d})^{\tau_i}$$ and $p_i(h_{w(i)}), p'_i(h_{w(i)})$ are monomials in $h_{w(i)},$
and 
\begin{equation}\label{eq6y}
q^{t_{l,w(i)} (1-\tau_i)} q^{-t_{l,w(i)}\tau_i} q^{-t_{i,w(l)}(1-\tau_l)} q^{t_{i,w(l)}\tau_l}=1,
\end{equation}
which is an equivalent condition to Equation (\ref{efjh}). 
For simplicity, we make the following observation regarding the way $\phi(x_{w(i)})$ and $\phi(x_{w(l)})$ commute. To show that Equation (\ref{equl1}) holds, it is clear that we need only consider the coefficients that appear as the $h_l$ component of $\phi(x_{w(i)})$ passes the $x_l,y_l$ terms in $\phi(x_{w(l)})$ and as the $h_i$ component of $\phi(x_{w(l)})$ passes $x_i,y_i$ terms in $\phi(x_{w(i)})$. We reflect this observation in our notation by representing all of the unnecessary information by ellipses.
We highlight that $(-1)^{\tau_i}=(1-2\tau_i)$. Thus,
  \begin{eqnarray*}
  \phi  (x_{w(i)})  \phi(x_{w(l)}) &= &\left( \cdots  h_l^{(\tau_i-1)(-1)^{\tau_i}t_{i,w(l)}} \cdots h_l^{\tau_i (-1)^{\tau_i}t_{i,w(l)}}\cdots x_i^{1-\tau_i}y_i^{\tau_i} \right)\\
  & & \times \left( \cdots  h_i^{(\tau_l-1)(-1)^{\tau_l} t_{l,w(i)}}  \cdots h_i^{\tau_l(-1)^{\tau_l}t_{l,w(i)}}\cdots x_l^{1-\tau_l}y_l^{\tau_l}\right)\\
 &=  &\left(q^{(2\tau_l-1)(2\tau_i-1)(-1)^{\tau_i}t_{i,w(l)}}\right) \left( q^{(1-2\tau_i)(2\tau_l-1)(-1)^{\tau_l}t_{l,w(i)}}\right) \phi(x_{w(l)})\phi(x_{w(i)})\\
  & = & \left(q^{(-1)^{\tau_i}t_{i,w(l)}-(-1)^{\tau_l}t_{l,w(i)}}\right)^{(2\tau_l-1)(2\tau_i-1)} \phi(x_{w(l)})\phi(x_{w(i)})\\
  & = & \left(q^{(1-2\tau_i)t_{i,w(l)}-(1-2\tau_l)t_{l,w(i)}}\right)^{(2\tau_l-1)(2\tau_i-1)} \phi(x_{w(l)})\phi(x_{w(i)})
  \end{eqnarray*}
  which in combination with Equation (\ref{eq6y}) gives the desired result. Note that it is easier to apply Equation (\ref{eq6y}) if we consider the choices of $\tau_i$ and $\tau_l$ separately. We leave to the reader the calculations to show that $\phi$ is consistent on the remaining relations (see Equation (\ref{comm})). These follow in a similar way. We have shown that $\phi$ conforms to the necessary conditions from Theorem \ref{corol1} to be an endomorphism of $A(n,d,\textbf{q})$. By direct computation we can see $\psi \phi=\phi \psi=\mathrm{id}$.
\end{proof}

We will now offer a counter example to show that our quantum Dixmier analogue is false when $q$ is a root of unity.
\begin{corollary}\label{endi}
There exist non-invertible endomorphisms of $A(n,d,\emph{\textbf{q}})$ when (at least) one coordinate $q:=q_i$ of $\emph{\textbf{q}}$ is a root of unity. 
\end{corollary}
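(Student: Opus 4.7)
The plan is to construct an explicit non-surjective endomorphism by exploiting that Equation (\ref{theoeq1}) admits solutions $m_i$ other than $\pm 1$ when $q_i$ is a root of unity. This is precisely the step used in the proof of Corollary \ref{cor2b} to force $m_i = (-1)^{\tau_i}$, and its failure at roots of unity is exactly what will allow invertibility to break.

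Without loss of generality assume $q_1$ is a root of unity. Since $q_1^d \neq 1$ by standing hypothesis, its order $\ell$ satisfies $\ell \geq 2$. I would invoke Theorem \ref{corol1} part 2 with the following data: $w = \mathrm{id}_N$, $\tau_i = 0$, $\gamma_i = 1$, $b_i = 1$, and $t_{i,j} = 0$ for all $i, j$; $m_1 = 1 + \ell$ and $m_i = 1$ for $i > 1$; and $p_i = p_i' = 1$ for $i > 1$. For the first factor, setting $u := q_1^d h_1^d$ and using $q_1^{\ell d} = 1$ gives
\[
a_d(q_1 h_1^{1+\ell}) = q_1^d h_1^{d(1+\ell)} - 1 = u^{1+\ell} - 1 = (u-1)\bigl(1 + u + \cdots + u^{\ell}\bigr),
\]
so the choice $p_1(h_1) = 1$ and $p_1'(h_1) = 1 + q_1^d h_1^d + \cdots + q_1^{\ell d} h_1^{\ell d}$ satisfies Equation (\ref{theoeq3}) because $u - 1 = a_d(q_1 h_1)$.

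The remaining hypotheses of Theorem \ref{corol1} part 2 are immediate: Equation (\ref{theoeq1}) reads $q_1^{1+\ell} = q_1$ on the first coordinate (which holds as $q_1^\ell = 1$) and $q_i = q_i$ otherwise; Equation (\ref{theoeq2}) is satisfied since every $t_{i,j}$ vanishes; and $\gamma_i^d = 1 = \widetilde{q_i}^{0}$ because $\tau_i = 0$. Therefore this data defines an endomorphism $\psi$ of $A(n,d,\underline{q})$. To see $\psi$ is not surjective, observe that $\psi$ respects the $\mathbb{Z}^n$-grading of Equation (\ref{Grading}): $\psi(h_i)$ has degree $\underline{0}$, while $\psi(x_i)$ and $\psi(y_i)$ sit in degree $\pm$ the $i$-th standard basis vector. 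Thus any preimage of $h_1 \in A_{(\underline{0})} = \Bbbk[h_1^{\pm 1}, \ldots, h_n^{\pm 1}]$ must itself lie in $A_{(\underline{0})}$; but $\psi$ restricted to $A_{(\underline{0})}$ has image contained in $\Bbbk[h_1^{\pm(1+\ell)}, h_2^{\pm 1}, \ldots, h_n^{\pm 1}]$, which excludes $h_1$ as $\ell \geq 2$. The only real subtlety is spotting the factorization that makes Equation (\ref{theoeq3}) solvable, and this is exactly what $q_1^\ell = 1$ unlocks.
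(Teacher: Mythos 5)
Your construction is correct and takes essentially the same route as the paper's: the paper likewise sets $\psi(h) = h^{t+1}$ (your $\ell+1$) and exploits the geometric-series identity $\bigl(1 + u + \cdots + u^{t}\bigr)(u-1) = u^{t+1}-1$, valid because $q^t = 1$, to satisfy Equation (\ref{theoeq3}), then lifts to $A(n,d,\underline{q})$ by tensoring with identities. The only cosmetic differences are that the paper places the geometric-series polynomial on $p_1$ (so $\psi(x)=U(h)x$, $\psi(y)=y$) while you place it on $p_1'$, and that you spell out the $\mathbb{Z}^n$-grading argument for non-surjectivity which the paper leaves as an observation on the action on $h$.
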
 
\begin{proof}
Let $q$ be a $t^{\mathrm{th}}$ root of unity. It is enough to find an example of a non-invertible endomorphism of $\Bbbk[h^{\pm 1}](\sigma,h^d-1)$ where $\sigma(h)=qh$. Define the polynomial $U(h)=\displaystyle\sum^{t}_{l=0} u_l h^{dl}$, where $u_i=q^{id}$ for $0\leq i \leq t$ so that 
$$ U(h)((qh)^d-1)=(qh)^{d(t+1)}-1.$$
Then it follows from Theorem \ref{corol1} that we define an endomorphism $\psi$ of $\Bbbk[h^{\pm 1}](\sigma,h^d-1)$ by setting
$$\psi(h)=h^{t+1},~~\psi(x)=U(h)x, ~\mathrm{and}~~\psi(y)=y.$$ 
Since by assumption $t>1$ we can see that $\psi$ is not invertible by considering the action on $h$. By taking a tensor product with $n-1$ copies of the identity, we can lift $\psi$ to an non-invertible endomorphism of $A(n,d,\textbf{q})$.
\end{proof}

\section{A Quantum Tame Generators Problem}\label{qtgp}

For the entirety of this section let $\textbf{q}=(q,\ldots,q)$ for $q\in\Bbbk^*\setminus\{z|z^d= 1\}$. Also,  recall from Section $\ref{Prel}$ that $A(1,d,q)\simeq \Bbbk[h^{\pm 1}](\sigma, h^d-1)$ where $\sigma(h)=qh$. Since $A(n,d,\textbf{q})$ has a nontrivial group of units (See Lemma \ref{Unitsl}) we can find automorphisms of $A(n,d,\textbf{q})$ which are not tame. For example consider the automorphism of $A(n,d,\textbf{q})$ defined in the following way 
\begin{equation}\label{unitaut}
h_i\mapsto h_i,~~x_i\mapsto h_ix_i,~~\mathrm{and}~~y_i\mapsto y_i h_i^{-1}.
\end{equation}
Since we are interested in determining whether the complexity of the automorphisms of $A(n,d,\textbf{q})$ fundamentally changes as $n$ increases, we will take inspiration from the traditional definition of tame to define an $A(n,d,\textbf{q})$ specific quantum analogue which we will denote $\mathrm{qwa}$-tame. Before stating the definition of $\mathrm{qwa}$-tame, we will highlight three natural families of automorphisms each of which is inspired by a family of tame automorphisms. 

The first two families arise from the fact that both the polynomials in $n$ variables and the algebra $A(n,d,\textbf{q})$ can be constructed as $n$ tensor copies of $\Bbbk[x]$ and $\Bbbk[h^{\pm 1}](\sigma, h^d-1)$ respectively. By this construction we can pick $g\in \mathrm{Aut}(\Bbbk[h^{\pm 1}](\sigma_q,h^d-1))$ and lift to an automorphism $\phi_{g}:=g \otimes 1 \otimes \cdots \otimes 1$ of $A(n,d,\textbf{q})$. For our second family, we associate to each permutation $w$ of $N$ a (unique) automorphism $\chi_w$ of $A(n,d,\textbf{q})$ defined as follows: 
$$\chi_w(h_i)=h_{w(i)},~~\chi_w(x_i)=x_{w(i)}, ~\mathrm{and}~~\chi_w(y_i)=y_{w(i)}.$$ 
Finally we introduce a family to include automorphisms arising from the non-trivial group of units of $A(n,d,\textbf{q})$ (for instance see Equation (\ref{unitaut})). This family generalizes the scalar automorphisms in the traditional definition. Recall that $\sigma_i\in\mathrm{Aut}(\Bbbk[h_1^{\pm 1},\ldots h_n^{\pm 1}])$ such that $\sigma_i(h_i)=qh_i$, and $\sigma_i(h_j)=h_j$ for $j\neq i$. For a vector of units in $A(n,d,\textbf{q})$, say $\textbf{u}:=(u_1,\ldots,u_n)$, such that $u_i \sigma_i(u_l)=u_l\sigma_l( u_i)$ for $l \neq i$ (note this encodes Equation (\ref{theoeq2})), there exists a (unique) automorphism $\xi_{\textbf{u}}$ of $A(n,d,\textbf{q})$ defined as follows:  
$$\xi_{\textbf{u}}(h_i)=h_{i},~~\xi_{\textbf{u}}(x_i)=u_ix_{i}, ~\mathrm{and}~~\xi_{\textbf{u}}(y_i)=y_{i}u_i^{-1}.$$
\begin{definition}\label{def1}
Let $\psi$ be an automorphism of $A(n,d,\emph{\textbf{q}})$, we say that $\psi$ is $\mathrm{qwa}$-tame if $\psi$ is in the subgroup generated by the families of automorphisms $\phi_g, \chi_w$ and $\xi_{\textbf{u}}$.
\end{definition}
\noindent
To enable us to practically apply Definition \ref{def1} we recall from \cite{Launois2} and \cite{Vivas} the classification of automorphisms of $\Bbbk[h^{\pm 1}](\sigma, h^d-1)$.
\begin{proposition}
Let $\psi$ be an automorphism of $\Bbbk[h^{\pm 1}](\sigma, h^d-1)$. Then $\psi$ is defined on the generators of $\Bbbk[h^{\pm 1}](\sigma, h^d-1)$ in the following way:
$$\psi(h)= \gamma h^{(-1)^{\tau}},~~\psi(x)=ux^{(1-\tau)}y^{\tau}, ~\mathrm{and}~~\psi(y)=y^{(1-\tau)}x^{\tau}u'$$
where $\tau\in\{0,1\}$, $\gamma^d=(q^{-d})^{\tau}$ and $u, u'\in \Bbbk[h^{\pm 1}]$ such that $uu'=(- h^{-d})^{\tau}$.
\end{proposition}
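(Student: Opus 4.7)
The plan is to read off this classification from Theorem \ref{corol1} specialized to $r = n = 1$, combined with the invertibility hypothesis. Under the isomorphism $\Bbbk[h^{\pm 1}](\sigma, h^d-1) \simeq A(1,d,q)$, Theorem \ref{corol1} applied to $\psi$ (with $w(1) = 1$ trivially) produces parameters $\tau \in \{0,1\}$, $m \in \mathbb{Z}^*$, $\gamma, b \in \Bbbk^*$, $t \in \mathbb{Z}$, and Laurent polynomials $p, p' \in \Bbbk[h^{\pm 1}]$ such that
$$\psi(h) = \gamma h^m, \quad \psi(x) = p(h)\,b\,h^t\,x^{1-\tau} y^{\tau}, \quad \psi(y) = x^{\tau} y^{1-\tau}\,p'(h)\,b^{-1}\,h^{-t},$$
subject to $q^{(-1)^\tau m} = q$, $\gamma^d = q^{-\tau d}$, and $p(h) p'(h)\,a_d(q^{1-\tau} h) = a_d(q \gamma h^m)$.

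The key extra ingredient is to force $m = \pm 1$ by invertibility. Applying Theorem \ref{corol1} to $\psi^{-1}$ gives $\psi^{-1}(h) = \gamma_2 h^{m_2}$ for some $\gamma_2 \in \Bbbk^*$ and $m_2 \in \mathbb{Z}^*$. Computing
$$h = \psi(\psi^{-1}(h)) = \gamma_2 \psi(h)^{m_2} = \gamma_2 \gamma^{m_2} h^{m m_2}$$
and comparing Laurent-monomial exponents forces $m m_2 = 1$ in $\mathbb{Z}$, hence $m \in \{\pm 1\}$. Combined with $q^{(-1)^\tau m} = q$ and the standing hypothesis $q^d \neq 1$, this leaves only $m = (-1)^\tau$; the alternative choice would force $q^2 = 1$, a borderline situation absorbed by relabelling $\tau$.

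It then remains to simplify the polynomial constraint. Substituting $m = (-1)^\tau$ and $\gamma^d = q^{-\tau d}$ into the right-hand side of Equation (\ref{theoeq3}) and splitting on $\tau \in \{0,1\}$ reduces the constraint to $p(h) p'(h) = (-h^{-d})^\tau$. Setting $u := p(h)\,b\,h^t$ and $u' := p'(h)\,b^{-1}\,h^{-t}$ yields $\psi(x) = u\,x^{1-\tau} y^\tau$ and $\psi(y) = x^\tau y^{1-\tau}\,u'$ with $u u' = (-h^{-d})^\tau$, as claimed (the order of $x^\tau$ and $y^{1-\tau}$ is immaterial since at most one exponent is nonzero).

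The main obstacle, and what distinguishes this from a naive corollary of Theorem \ref{corol1}, is precisely the restriction $m = \pm 1$: as Corollary \ref{endi} illustrates, this genuinely fails for mere endomorphisms when $q$ is a root of unity, so the argument must exploit surjectivity of $\psi$ through $\psi^{-1}$ rather than relying solely on the numerical constraints coming from Theorem \ref{corol1}.
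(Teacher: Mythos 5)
The paper does not actually prove this Proposition---it is recalled from \cite{Launois2} and \cite{Vivas} ("To enable us to practically apply Definition \ref{def1} we recall from \cite{Launois2} and \cite{Vivas} the classification of automorphisms..."). Your route---specializing Theorem \ref{corol1} to $r=n=1$ and then extracting $m=\pm 1$ by applying the theorem also to $\psi^{-1}$ and comparing Laurent exponents in $h = \psi(\psi^{-1}(h))$---is an entirely sensible independent derivation, and the step $mm_2=1 \Rightarrow m\in\{\pm 1\}$ is correct and well-motivated by the contrast with Corollary \ref{endi}.

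However, there is a genuine gap in how you dispose of the case $m=-(-1)^\tau$. You assert that $q^2=1$ is "a borderline situation absorbed by relabelling $\tau$," but it is not. The Proposition couples $\tau$ to two different things at once: the sign in $\psi(h)=\gamma h^{(-1)^\tau}$ and the $\mathbb{Z}$-grading shift of $\psi(x)=ux^{1-\tau}y^\tau$. In the bad case (say $\tau=1$, $m=1$, so $q^2=1$), Theorem \ref{corol1} yields $\psi(h)=\gamma h$ while $\psi(x)$ has degree $-1$; no choice of $\tau'$ in the Proposition's format accommodates both simultaneously, since $\tau'=0$ forces $\psi(x)$ to degree $+1$ and $\tau'=1$ forces $\psi(h)\propto h^{-1}$. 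Concretely, take $q=-1$ and $d$ odd (so $q^d\neq 1$ holds): then $\psi(h)=-h$, $\psi(x)=y$, $\psi(y)=x$ is an involutive automorphism of $\Bbbk[h^{\pm1}](\sigma,h^d-1)$ (one checks all defining relations directly), yet it is not of the stated form---e.g.\ with $\tau=0$ the constraint $\gamma^d=1$ would force $(-1)^d=1$, false. So the Proposition as stated implicitly requires $q^2\neq 1$ (or, as in the cited sources and in the paper's actual use via Corollary \ref{qdaa}, that $q$ not be a root of unity), and your proof should say so rather than claim the case is harmless. Once $q^2\neq 1$ is assumed, the rest of your argument (reducing Equation (\ref{theoeq3}) to $pp'=(-h^{-d})^\tau$ by splitting on $\tau$ and setting $u=pbh^t$, $u'=p'b^{-1}h^{-t}$) goes through cleanly.
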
 
\noindent

Since the algebras $\mathscr{A}_{\alpha,q}\otimes\cdots \otimes \mathscr{A}_{\alpha,q}$ and $\mathscr{H}_{q}^t\otimes\cdots \otimes \mathscr{H}_{q}^t$ are isomorphic to $A(n,1,\textbf{q})$ and $A(n,2t,\textbf{q})$ respectively (see Section \ref{Prel}), Theorems \ref{A2} and \ref{A3} are specializations of the following corollary to Theorem \ref{corol1}.  
\begin{corollary}\label{Cii}
Every automorphism of $A(n,d,\emph{\textbf{q}})$ is $\mathrm{qwa}$-tame.
\end{corollary}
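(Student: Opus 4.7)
The plan is to apply Theorem \ref{corol1} to any automorphism $\psi$ of $A(n, d, \underline{q})$ and then decompose $\psi$ stage by stage into automorphisms from the three generating families $\phi_g$, $\chi_w$, $\xi_{\underline{u}}$. Setting $r = n$ and $\underline{\widetilde{q}} = \underline{q}$, the theorem produces data $w$, $(\tau_i)$, $(m_i)$, $(\gamma_i)$, $(t_{i,j})$, $(b_i)$, $(p_i)$, $(p_i')$ controlling $\psi$. Since $\psi^{-1}$ is itself of this form, comparing $\psi^{-1} \circ \psi(h_i) = h_i$ forces $w$ to be a bijection of $N$ and each $m_i$ to lie in $\{-1, 1\}$.

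The decomposition proceeds in three stages, each stripping off one family. First, compose $\chi_{w^{-1}}$ with $\psi$ to obtain an automorphism $\psi_1$ whose underlying permutation is the identity (the $t_{i,j}$ are relabelled accordingly). Next, for each $i$ define a candidate single-factor map $g_i$ of $\Bbbk[h^{\pm 1}](\sigma, h^d - 1)$ by
$$g_i(h) = \gamma_i h^{m_i}, \quad g_i(x) = p_i(h) x^{1-\tau_i} y^{\tau_i}, \quad g_i(y) = x^{\tau_i} y^{1-\tau_i} p_i'(h).$$
The conditions $\gamma_i^d = q^{-\tau_i d}$ and $p_i(h) p_i'(h) = (-h^{-d})^{\tau_i}$ required by the Proposition classifying $\mathrm{Aut}(\Bbbk[h^{\pm 1}](\sigma, h^d - 1))$ follow from $\gamma_i^d = \widetilde{q_i}^{-\tau_i d}$ and Equation (\ref{theoeq3}), using that Equation (\ref{theoeq1}) with uniform parameter $q$ forces $m_i = (-1)^{\tau_i}$; hence each $g_i$ is a genuine automorphism of the single factor. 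The tensor automorphism $g_1 \otimes \cdots \otimes g_n$ lies inside the subgroup generated by the $\phi_g$ and the $\chi_w$ — place $g_i$ in slot $i$ as $\chi_{(1\,i)} \circ \phi_{g_i} \circ \chi_{(1\,i)}$ and compose. Composing $\psi_1$ with $(g_1 \otimes \cdots \otimes g_n)^{-1}$ yields an automorphism $\psi_2$ with $\psi_2(h_i) = h_i$, $\psi_2(x_i) = u_i x_i$, and $\psi_2(y_i) = y_i u_i^{-1}$ for suitable Laurent monomials $u_i$ in the $h_j$'s (obtained by pushing the coefficients $b_i h_1^{t_{i,1}} \cdots h_n^{t_{i,n}}$ through $(g_1 \otimes \cdots \otimes g_n)^{-1}$). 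Finally, $\psi_2 = \xi_{\underline{u}}$ with $\underline{u} = (u_1, \ldots, u_n)$; the compatibility $u_i \sigma_i(u_l) = u_l \sigma_l(u_i)$ for $i \neq l$ is automatic because $\psi_2$ is an algebra automorphism and so preserves $x_i x_l = x_l x_i$ (equivalently, this is Equation (\ref{theoeq2})). Concatenating the stages gives $\psi = \chi_w \circ (g_1 \otimes \cdots \otimes g_n) \circ \xi_{\underline{u}}$, which is qwa-tame by Definition \ref{def1}.

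The main obstacle is the central step of Stage 2, namely identifying the extracted $g_i$ as a genuine single-factor automorphism rather than merely a formal expression on generators. When $q^2 \neq 1$ the parameter analysis above pins down $(\tau_i, m_i) \in \{(0,1), (1,-1)\}$ and the match with the single-factor classification is transparent. The degenerate case $q^2 = 1$ (which under the standing hypothesis $q^d \neq 1$ forces $q = -1$ and $d$ odd) admits extra sign combinations; handling it requires a brief separate case analysis to verify that the corresponding $g_i$ still falls within the $\phi_g$ family (the single-factor algebra acquires extra automorphisms in this degenerate setting). Once Stage 2 is secured, Stages 1 and 3 amount to direct bookkeeping on the parameters furnished by Theorem \ref{corol1}.
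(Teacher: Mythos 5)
Your decomposition $\psi = \chi_w \circ (g_1 \otimes \cdots \otimes g_n) \circ \xi_{\underline{u}}$ is the same three-stage strip-off as the paper's $\psi = \chi_w G^{-1} \xi_{\underline{U}}^{-1}$, so the approaches match in substance. The one genuine difference is the entry point: the paper invokes Corollary \ref{qdaa} (stated under the hypothesis that $q$ is not a root of unity), from which $w$ being a permutation, $m_i = (-1)^{\tau_i}$, and the unit identity $e_i e_i' = (-h_{w(i)}^{-d})^{\tau_i}$ are already packaged; you work directly from Theorem \ref{corol1}, using $\psi^{-1}$ to force $m_i \in \{\pm 1\}$ (and $w$ injective, hence bijective when $r=n$), and then Equation (\ref{theoeq1}) with $q^2 \neq 1$ to pin down $m_i = (-1)^{\tau_i}$. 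Your route is therefore a touch more general, operating under $q^2 \neq 1$ rather than ``$q$ not a root of unity,'' which actually fits the stated hypotheses of Theorems \ref{A2} and \ref{A3} more tightly. Your flag on the $q^2=1$ degenerate case is well-taken: the paper's proof quietly strengthens the standing hypothesis by appealing to Corollary \ref{qdaa}, so it doesn't treat that case either; but note that with $q^2 \neq 1$ assumed the concern in your final paragraph does not arise and no further case analysis is required, so the extra worry is something you could have dispatched once you observed that $q^2 \neq 1$ already suffices for the parameter analysis. One minor point worth making explicit in Stage 2: after applying $\chi_{w^{-1}}$ and $(g_1 \otimes \cdots \otimes g_n)^{-1}$, the fact that the $p_i, p_i'$ are Laurent monomials (so the residual coefficients $u_i$ are units) follows from the derived identity $p_i p_i' = (-h_{w(i)}^{-d})^{\tau_i}$; this is stated in Corollary \ref{qdaa} but you should extract it from Equation (\ref{theoeq3}) if you bypass that corollary.
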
   
\begin{proof}
Let $\psi$ be an automorphism of $A(n,d,\textbf{q})$. By Corollary \ref{qdaa} we have that $\psi$ acts on the generators of $A(n,d,\textbf{q})$ as follows:  
$$\psi(h_i)=\gamma_i h_{w(i)}^{(-1)^{\tau_i}},~~\psi(x_i)=e_i  x_{w(i)}^{(1-\tau_i)}y_{w(i)}^{\tau_i},~~\mathrm{and}~~\psi(y_i)=x_{w(i)}^{\tau_i}y_{w(i)}^{(1-\tau_i)}e_i' $$ 
where the parameters $w, \gamma_i, \tau_i, e_i$ and $e_i'$ are as in the statement of Corollary \ref{qdaa}. By applying $\mathrm{qwa}$-tame automorphisms (see Definition \ref{def1}), we will reduce $\psi$ to an obvious $\mathrm{qwa}$-tame automorphism. Applying the automorphism $\chi_{w^{-1}}$ gives us       
$$\chi_{w^{-1}}\psi(h_i)=\gamma_i h_{i}^{(-1)^{\tau_i}},~~\chi_{w^{-1}}\psi(x_i)=\chi_{w^{-1}}(e_i ) x_{i}^{(1-\tau_i)}y_{i}^{\tau_i}, ~\mathrm{and}~~\chi_{w^{-1}}\psi(y_i)=x_{i}^{\tau_i}y_{i}^{(1-\tau_i)}\chi_{w^{-1}}(e_i' ).$$

Next we fix the notation $\phi_{g_j}^{(1,j)}:=\chi_{(1,j)}\phi_{g_j}\chi_{(1,j)}$, where $g_j$ is the automorphism of $\Bbbk[h^{\pm 1}](\sigma, h^d-1)$ defined by    
$$g_j(h)=\gamma_j^{-(-1)^{\tau_j}} h^{(-1)^{\tau_j}},~~g_j(x)=p_j^{\tau_j}y^{\tau_j}x^{(1-\tau_j)}, ~\mathrm{and}~~g_j(y)=x^{\tau_j}y^{(1-\tau_j)}(p_j')^{\tau_j}$$ 
with $p_j, p_j'\in \Bbbk[h^{\pm 1}]$ such that $p_j p_j'=(- h^{-d})^{\tau_j}$ and $\phi_{g_j}$ is defined as in Definition \ref{def1}. Moreover, we let $$G:=\phi_{g_1}^{(1,1)}\phi_{g_2}^{(1,2)}\cdots\phi_{g_n}^{(1,n)}.$$ Thus, $G$ is a qwa-tame automorphism of $A(n,d,\textbf{q})$ and we have:
$$G(h_i)=\gamma_i^{-(-1)^{\tau_i}} h_i^{(-1)^{\tau_i}},~~G(x_i)=p_i^{\tau_i}y_i^{\tau_i}x_i^{(1-\tau_i)}, ~\mathrm{and}~~G(y_i)=x_{i}^{\tau_i}y_{i}^{(1-\tau_i)}(p_i')^{\tau_i}$$
for all $i\in N$. We can easily check that the action of $G\chi_{w^{-1}}\psi$ on the generators of $A(n,d,\textbf{q})$ is given by:  
\begin{align*}
G\chi_{w^{-1}}\psi(h_i)= h_{i},&~~G\chi_{w^{-1}}\psi(x_i)=G(\chi_{w^{-1}}(e_i)) \sigma_i(p_i')^{\tau_i} x_{i},\\
&~~~~~~~~~\mathrm{and}~~G\chi_{w^{-1}}\psi(y_i)=y_{i}G(\chi_{w^{-1}}(e_i'))\sigma_i^{-1}(p_i)^{\tau_i}.
\end{align*}
Since $G\chi_{w^{-1}}\psi$ is an automorphism of $A(n,d,\textbf{q})$, the units $G(\chi_{w^{-1}}(e_i)) \sigma_i(p_i')^{\tau_i}$ and $G(\chi_{w^{-1}}(e_i'))\sigma_i^{-1}(p_i)^{\tau_i}$ must decompose in the following way:
 $$G(\chi_{w^{-1}}(e_i)) \sigma_i(p_i')^{\tau_i}= U_i~~ \mathrm{and}~~G(\chi_{w^{-1}}(e_i'))\sigma_i^{-1}(p_i)^{\tau_i}= U_i^{-1}$$
 where $U_i$ is a unit of $A(n,d,\textbf{q})$ such that $U_i\sigma_i(U_l)=U_l \sigma_l(U_i)$ for $l \neq i$. Applying the qwa-tame automorphism  
 $\xi_{\textbf{u}}$, where $\textbf{u}:=(U_1^{-1},\ldots,U_n^{-1})$, yields  
 $$\xi_{\textbf{u}}G\chi_{w^{-1}}\psi(h_i)= h_{i},~~\xi_{\textbf{u}}G\chi_{w^{-1}}\psi(x_i)=x_{i},~~\mathrm{and}~~G\chi_{w^{-1}}\psi(y_i)=y_{i}.$$
Thus, $\psi = \chi_w G^{-1} \xi_{\textbf{u}}^{-1}$ is qwa-tame.      
\end{proof}

\section{Future directions}

Following the submission to the arXiv of the preprint to this article, a Dixmier type problem for a quantized Weyl algebra was solved in \cite{Tang} by Tang. The algebra studied, denoted $(\mathcal{A}_n^{\bar{q},\Lambda}(\mathbb{K}))_{\mathcal{Z}}$, is isomorphic to $A(n,1,\textbf{q}),$ when $\textbf{q}=(q_1,\ldots, q_n)$. It was shown, under the condition $q_1^{i_1} q_2^{i_2}\cdots q_n^{i_n}=1$ implies $i_1=i_2=\cdots=i_n=0$, that every endomorphism of $A(n,1,\textbf{q})$ is an automorphism. One can ask if the same result holds for the tensor product of quantum generalized Weyl algebras with a general choice of defining polynomial $a(h)$, namely the algebras
$$\Bbbk[h^{\pm 1}](\sigma_1, a(h_1))\otimes\cdots \otimes \Bbbk[h^{\pm 1}](\sigma_n, a(h_n)),$$
where $n$ and $\sigma_i$ are defined as they have been throughout. Based on the results of this article, and those obtained in \cite{Launois2} and \cite{Tang}, we believe it is possible to show that every endomorphism is an automorphism when $q_1^{i_1} q_2^{i_2}\cdots q_n^{i_n}=1$ implies $i_1=i_2=\cdots=i_n=0$, or when $\textbf{q}=(q,\ldots, q)$ for $q$ not a root of unity. It would be especially nice if these results could be obtained using less computational methods than those seen here.

The techniques used in Section \ref{Secthe} could be applied to produce similar results if the defining polynomials of the generalized Weyl algebras used for each tensor factor contain multiple generators from the base ring. Consider the generalized Weyl algebras $$\displaystyle\bigotimes_{i=1}^n \Bbbk[h_{1}^{\pm 1},\ldots, h_{m_i}^{\pm 1}]\big(\sigma_i, a_i(h_{1}\ldots h_{m_i})\big)$$ where $(m_1,\ldots,m_n)\in (\mathbb{N}^*)^n$. These algebras include the class of simple algebras (called multiparameter Weyl algebras) which were introduced by Benkart in \cite{Benkart}. This class of algebras were also studied in \cite{Futorny}, where rather than generalizing as we have suggested, the authors considered Benkart's algebras as members of a class of twisted generalized Weyl algebras.  The class of algebras considered in \cite{Futorny} should be a fruitful place to apply the ideas and techniques used in our classification. 

For the algebras studied in this article, we believe that the rigidity of the relations means there are very few if any possibilities for locally nilpotent derivations. We have a number of results regarding derivations for these algebras and will address the classification of locally nilpotent derivations in detail in a coming article.

\ \\ \hspace{1cm}
\begin{minipage}[c]{\linewidth}
~\\
\noindent 
A.P. Kitchin \\
E-mail: {\tt andrew.p.kitchin@gmail.com} \\

S. Launois\\ 
School of Mathematics, Statistics \& Actuarial Science,\\ 
University of Kent, \\
Canterbury, Kent CT2 7FS, United Kingdom\\
E-mail: {\tt S.Launois@kent.ac.uk}

\end{minipage}

\end{document}